\theoremstyle{definition}
\newtheorem{teorema}{Theorem}[section]
\newtheorem{proposicao}[teorema]{Proposition}
\newcommand{\downarrowtail}{\vcenter{\hbox{\rotatebox{90}{$\leftarrowtail$}}}}
\theoremstyle{definition}
\newtheorem{definicao}[teorema]{Definition}
\newtheorem{exemplo}[teorema]{Example}
\newtheorem{examples}[teorema]{Examples}
\newtheorem{obs}[teorema]{Remark}
\theoremstyle{remark}
\newcommand{\vertbold}{\;\vcenter{\hbox{\rule{1.2pt}{1.2em}}}\;}
\title[Manin triples for double Lie bialgebroids]{{\large M}anin triples for double {\large L}ie bialgebroids}
\author{{\small A}na {\small C}arolina {\small M}ançur \\ 
{\small \href{mailto:carol.mancu@gmail.com}{\texttt{\lowercase{carol.mancu@gmail.com}}}}
}
\begin{document}

\begin{abstract} 

We verify that LA-Courant algebroids provide the Manin triple framework for double Lie bialgebroids. Specifically, we establish a correspondence between double Lie bialgebroids and LA-Manin triples, i.e., LA-Courant algebroids equipped with a pair of complementary LA-Dirac structures. As an application, LA-Courant algebroids and CA-groupoids given by Drinfeld doubles are shown to correspond via integration and differentiation.

\end{abstract}

\maketitle
\pagestyle{plain}

\tableofcontents

\section{Introduction}

Lie bialgebras arise as the infinitesimal versions of Poisson Lie groups \cite{drinfel1990hamiltonian} and have an important characterization as Manin triples \cite{drinfel1988quantum}, i.e., quadratic Lie algebras equipped with a pair of transverse Lagrangian subalgebras. This description was generalized to Lie bialgebroids by Liu, Weinstein, and Xu \cite{liu1997manin}, who demonstrated that Courant algebroids provide the Manin triple framework for Lie bialgebroids: there is a one-to-one correspondence between Lie bialgebroids and Courant algebroids with two transverse Dirac structures. In this paper, we extend this correspondence to the setting of ``double structures over Lie algebroids", as explained below.

\vspace{1mm}

Poisson double groupoids \cite{mackenzie1999symplectic} have two differentiation steps, leading to ``double bialgebroids" at two levels, namely, Lie bialgebroid groupoids and double Lie bialgebroids \cite{bursztyn2021poisson}. In \cite{mancur1}, the author showed that Lie bialgebroid groupoids, given by LA-groupoids in duality, correspond to multiplicative Manin triples in {\em CA-groupoids} \cite{li2012courant, cristianthesis, mehta2009q} (or {\em multiplicative Courant algebroids}). In this paper, we aim to take a step further by proving the infinitesimal version of this correspondence, i.e., that LA-Courant algebroids provide the Manin triple framework for double Lie bialgebroids \cite{bursztyn2021poisson}. Since double Lie algebroids and LA-Courant algebroids are more intricate than LA-groupoids and CA-groupoids, respectively, establishing the corresponding result in the infinitesimal setting involves additional technical difficulties.

\vspace{1mm}

Double Lie bialgebroids \cite{bursztyn2021poisson} consist of two double Lie algebroids in duality, as shown below,
$$
\begin{tikzcd}[
		column sep={2em,between origins},
		row sep={1.3em,between origins},]
		\Omega & \Longrightarrow & H  \\
		\Downarrow & & \Downarrow \\
		A & \Longrightarrow & M,
\end{tikzcd}
\quad
\begin{tikzcd}[
		column sep={2em,between origins},
		row sep={1.3em,between origins},]
		\,\, \Omega^{*_A} \,\,\, & \Longrightarrow & \,\, C^*  \\
		\Downarrow \,\,\, & & \Downarrow \\
		A \,\,\, & \Longrightarrow & M,
\end{tikzcd}
$$
such that the Lie algebroids $\Omega \Rightarrow A$ and $\Omega^{*_A} \Rightarrow A$ form a Lie bialgebroid, with $\Rightarrow$ indicating a Lie algebroid structure. To describe double Lie bialgebroids via Manin triples, the key point is identifying the double structure of their Drinfeld double. In the setting described above,
$\Omega \oplus_A \Omega^{*_A} \rightarrowtail A$ is a Courant algebroid\footnote{Throughout the paper, the arrow $\rightarrowtail$ denotes a Courant algebroid structure.} which carries an additional Lie algebroid structure $\Omega \oplus_A \Omega^{*_A} \Rightarrow H \oplus C^*$. Our central result is that these two structures are compatible in the sense that they form an {\em LA-Courant algebroid} \cite{li2012courant}. This leads to Theorem \ref{teorema2}, where we prove the correspondence between double Lie bialgebroids and LA-Courant algebroids equipped with two complementary LA-Dirac structures.

\vspace{1mm}

The core of the technical work in this paper is to show how the compatibility conditions of a double Lie bialgebroid, after taking the Drinfeld double, become precisely those of an LA-Courant algebroid. Describing LA-Courant algebroids in classical terms involves intricate compatibility conditions, due to their formulation in terms of triple vector bundles. To define such structure, Li-Bland \cite{li2012courant} considered the linear Poisson structure on the dual of the Lie algebroid, viewed as a morphism of triple vector bundles, and required a certain relation associated with this map to be a Courant relation. The key step in our approach is realizing that such a relation may be considered even when starting with only a Lie algebroid, as observed in \cite{jotz2020courant}. We then progressively include additional structures in a compatible manner and observe how this relation behaves, until we characterize the compatibility conditions of double Lie algebroids through it (see \S \ref{subsectiontherelation}). In the following diagram, we illustrate how our result completes the big picture. 

\vspace{1mm}

{\footnotesize
\bgroup
\def\arraystretch{1.6}
\begin{center}
\begin{tabular}{ |c|c|c| }
\hline
{\bf \small Global object} & {\bf \small Infinitesimal object}  & {\bf \small Drinfeld double} \\ \hline
Poisson Lie group & Lie bialgebra & quadratic Lie algebra
\\ \hline
Poisson groupoid & Lie bialgebroid & Courant algebroid
\\ \hline
Poisson double groupoid & Lie bialgebroid groupoid  & CA-groupoid
\\ \cdashline{2-3}
 & double Lie bialgebroid  & LA-Courant algebroid
\\ \hline
\end{tabular}
\end{center}
\egroup
}

\vspace{2mm}

In \cite{li2012courant}, Li-Bland asserts, through the use of graded manifolds, that LA-Courant algebroids are the infinitesimal counterpart of CA-groupoids (see also \cite[\S 7.3]{ten2019applications}). However, a direct argument for this relation is lacking.
In view of the integration results by Bursztyn, Cabrera, and del Hoyo \cite{bursztyn2021poisson}, our main result establishes a correspondence, via integration and differentiation, between LA-Courant algebroids and CA-groupoids given by Drinfeld doubles.

\vspace{1mm}

The paper is organized as follows. Section \S \ref{section2} recalls concepts and results on Manin triples for Lie bialgebroids. Section \S \ref{section3} reviews the notions of double structures over Lie algebroids, in particular, the definition of double Lie bialgebroid. In Section \S \ref{section4} we carry out a detailed study of the relation involved in the definition of LA-Courant algebroids and prove the main result of this paper (Theorem \ref{teorema2}). Section \S \ref{section5} illustrates an application of Theorem \ref{teorema2} by proving an integration/differentiation result between LA-Courant algebroids and CA-groupoids given by Drinfeld doubles.

\vspace{1mm}

\subsection*{Notations} We use the notation $G \rightrightarrows M$ for Lie groupoids, $A \Rightarrow M$ for Lie algebroids, and $E \rightarrowtail M$ for Courant algebroids.

\subsection*{Acknowledgments} The author thanks H. Bursztyn for his continuous support, insightful discussions and valuable suggestions; J. P. Ayala for the helpful conversations; D. Álvarez and M. Cueca for useful comments on a preliminary version of this note; and the anonymous referee for the careful reading and constructive recommendations. This work was partially supported by CAPES (Coordination for the Improvement of Higher Education Personnel), IMPA (Institute for Pure and Applied Mathematics), and by FAPESP (São Paulo Research Foundation) under grant \#2025/10946-6.

\section{Review of Manin triples for Lie bialgebroids}\label{section2}

In this section, we revisit definitions and results on the Manin triples theory for Lie bialgebroids.

\begin{definicao}
	Let $A \Rightarrow M$ be a Lie algebroid and suppose its dual bundle $A^* \rightarrow M$ also has a Lie algebroid structure. We say that $(A, A^*)$ is a \textbf{Lie bialgebroid} if the Lie algebroid differential $d_A : \Gamma(A^*) \rightarrow \Gamma(\wedge^2 A^*)$ is a derivation of the Schouten bracket $[\, , \,]_{A^*}$ on $\Gamma(A^*)$ in the sense that
	$$
	d_{A}[\alpha, \beta]_{A^*} = [d_A\alpha, \beta]_{A^*} + [\alpha, d_A\beta]_{A^*},
	$$
	for all $\alpha, \beta \in \Gamma(A^*)$. 
\end{definicao}

The well-known duality between linear Poisson structures and Lie algebroids establishes a natural bijection between Lie algebroid structures on $A \rightarrow M$ and linear Poisson structures on $A^* \rightarrow M$. There are different ways to describe the linearity of a Poisson structure (see \cite[\S 10.3]{mackenzie2005general}); in this paper, we use the following: given a vector bundle $E \rightarrow M$, a Poisson structure $\pi \in \mathfrak{X}^2(E)$ is \textbf{linear} if $\pi^{\sharp}: (T^*E \rightarrow E^*) \rightarrow (TE \rightarrow TM)$ is a vector bundle morphism. In this case, we say that $(E \rightarrow M,\pi)$ is a \textbf{Poisson vector bundle}. Recall that a Lie algebroid $A \Rightarrow M$ gives rise to tangent and cotangent Lie algebroids, $TA \Rightarrow TM$ and $T^*A \Rightarrow A^*$. If $\pi \in \mathfrak{X}^2(A)$ is a Poisson structure on a Lie algebroid $A \Rightarrow M$, we say that the pair $(A \Rightarrow M, \pi)$ is a \textbf{Poisson algebroid} if the map $\pi^{\sharp} : T^*A \rightarrow TA$ is a Lie algebroid morphism. Mackenzie and Xu \cite{mackenzie1994lie} showed that $(A \Rightarrow M, A^* \Rightarrow M)$ forms a Lie bialgebroid if and only if $(\pi, A \Rightarrow M)$ is a Poisson algebroid.

\begin{exemplo}\label{exemplosbialgebroids}
	Given $M$ a smooth manifold, its cotangent bundle $T^*M$ with the canonical symplectic form $\omega_{can}$ is a Poisson vector bundle whose dual Lie algebroid is $TM \Rightarrow M$. For a Poisson structure $\pi \in \mathfrak{X}^2(M)$, we can consider its associated cotangent Lie algebroid, denoted by $T^*M_{\pi}$. This construction gives rise to a linear Poisson structure on $TM$, coinciding with the tangent lift of $\pi$, denoted by $\pi^{tan}$. The pair $(TM, T^*M_{\pi})$ forms a Lie bialgebroid, and both $(T^*M_{\pi}, \omega_{can})$ and $(TM, \pi^{tan})$ are Poisson algebroids.
\end{exemplo}

A closely related notion to Lie bialgebroids is that of Courant algebroids \cite{liu1997manin}, which we recall using the non-skew-symmetric bracket formulation \cite{dorfman1993dirac, roytenberg1999courant}:

\begin{definicao}
	A \textbf{Courant algebroid} over a manifold $M$ is a vector bundle $\mathbb{E} \rightarrow M$, together with a bundle map $\rho : \mathbb{E} \rightarrow TM$, called \textit{anchor}, a non-degenerate symmetric bilinear form $\langle \,\, , \, \rangle$, and a bracket $[\![ \,\, , \, ]\!]$ on $\Gamma(\mathbb{E})$, satisfying the following axioms:
	\begin{enumerate}[left=10pt, itemsep=0.3em]
		\item[(C$1$)] $[\![ e_1 , [\![ e_2 , e_3 ]\!] ]\!] = [\![ [\![ e_1 , e_2 ]\!] , e_3 ]\!] + [\![ e_2 , [\![ e_1 , e_3 ]\!] ]\!]$;
		\item[(C$2$)] $\rho(e_1) \langle e_2 , e_3 \rangle = \langle [\![ e_1 , e_2 ]\!] , e_3 \rangle + \langle e_2 , [\![ e_1 , e_3 ]\!] \rangle$;
	    \item[(C$3$)] $[\![ e_1 , e_2 ]\!] + [\![ e_2 , e_1 ]\!] = 2 \rho^*(d \langle e_1, e_2 \rangle)$;
	    \item[(C$4$)] $[\![ e_1 , f e_2 ]\!] = f [\![ e_1 , e_2 ]\!] + \rho(e_1)(f)e_2$;
	\end{enumerate}
for $e_1, e_2, e_3 \in \Gamma(\mathbb{E})$ and $f \in C^{\infty}(M)$, where $\rho^* : T^*M \rightarrow \mathbb{E}^* \simeq \mathbb{E}$ is the dual map to $a$. We denote by $\overline{\mathbb{E}}$ the Courant algebroid with the same anchor and bracket of $\mathbb{E}$, but pairing $- \langle \,\, , \, \rangle$.
\end{definicao}

\begin{exemplo}\label{standardCA}
	The \textit{standard Courant algebroid} over a manifold $M$ is $\mathbb{T}M := TM \oplus T^*M$ with anchor map given by the projection on the first factor, bilinear form given by $\langle (X, \alpha), (Y, \beta) \rangle := \frac{1}{2} (\beta(X) + \alpha(Y))$, and bracket $[\![ (X, \alpha) , (Y, \beta) ]\!] := ([X, Y], \mathcal{L}_X\beta - \iota_Yd\alpha)$, for $X, Y \in \mathfrak{X}(M)$ and $\alpha, \beta \in \Omega^1(M)$.
\end{exemplo}

\vspace{1mm}

Let $\mathbb{E} \rightarrowtail M$ be a Courant algebroid. For any vector subbundle $L\to S$ of $\mathbb{E}\to M$, we can consider the \textbf{orthogonal complement} $L^{\perp}$ with respect to the induced metric, defining a new subbundle of $\mathbb{E}$ supported on $S$. Recall that there is a canonical identification $(L^{\perp})^{\perp} = L$. We will denote by $\Gamma(\mathbb{E}; L)$ the sections of $\mathbb{E}$ which take values in the subbundle $L$ when restricted to $S$, i.e.
$$
\Gamma(\mathbb{E}; L) := \{ e \in \Gamma(\mathbb{E}) \,\, \vert \,\, e\vert_S \in \Gamma(L) \}.
$$

\vspace{1mm}

\begin{definicao}
	Let $\mathbb{E} \rightarrowtail M$ be a Courant algebroid with anchor $\rho$ and bracket $[\![ \,\, , \, ]\!]$. A \textbf{Dirac structure with support on} $S$ is a subbundle $L \rightarrow S$ over a submanifold $S \subseteq M$, such that 
	\begin{enumerate}[left=10pt, itemsep=0.3em]
		\item[(1)] $L\vert_s$ is Lagrangian in $\mathbb{E}\vert_s$ for all $s \in S$, i.e. $L|_{s}=L|_{s}^{\perp}$;
		\item[(2)] $L$ is involutive, i.e. if for any $e_1, e_2 \in \Gamma(\mathbb{E};L)$ we have $[\![ e_1 , e_2 ]\!] \in \Gamma(\mathbb{E}; L)$;
		\item[(3)] $L$ is compatible with the anchor, i.e. $\rho(L) \subseteq TS$.
	\end{enumerate}
When $S = M$, $L$ is simply called a \textbf{Dirac structure}, and we say that $(\mathbb{E}, L)$ is a \textbf{Manin pair}. A triple $(\mathbb{E}, L_1, L_2)$ where $\mathbb{E}$ is a Courant algebroid and $L_1, L_2$ are transversal Dirac structures (i.e. $\mathbb{E} = L_1 \oplus L_2$) is called a \textbf{Manin triple}.
\end{definicao}

\begin{obs}
	Given $\mathbb{E} \rightarrowtail M$ a Courant algebroid and $L \rightarrow M$ a Dirac structure fully supported, then $(L, [\![ \,\, , \, ]\!]\vert_{\Gamma(L)}, \rho\vert_L)$ is a Lie algebroid. 
\end{obs}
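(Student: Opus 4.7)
The plan is to verify the Lie algebroid axioms for $(L, [\![ \cdot , \cdot ]\!]|_{\Gamma(L)}, \rho|_L)$ directly from the Dirac conditions on $L$ together with the Courant algebroid axioms (C1)--(C4). Involutivity of $L$ guarantees that the Courant bracket restricts to a well-defined $\mathbb{R}$-bilinear bracket on $\Gamma(L)$, while the Leibniz rule is immediate from (C4) applied to sections $e_1, e_2 \in \Gamma(L)$ and $f \in C^{\infty}(M)$.

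The crucial step is skew-symmetry, which is where the Lagrangian condition enters. Since $L = L^{\perp}$, the pairing $\langle e_1, e_2 \rangle$ vanishes identically for $e_1, e_2 \in \Gamma(L)$, so the right-hand side of (C3) is zero and $[\![ e_1, e_2 ]\!] = -[\![ e_2, e_1 ]\!]$ on $\Gamma(L)$. The Jacobi identity on $\Gamma(L)$ then follows formally: the Loday-type identity (C1), once combined with skew-symmetry, is equivalent to the usual Jacobi identity.

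Finally, one must verify that $\rho|_L$ is a morphism of brackets, i.e.\ $\rho([\![ e_1, e_2 ]\!]) = [\rho(e_1), \rho(e_2)]$ for $e_1, e_2 \in \Gamma(L)$. This is a standard consequence of the Courant algebroid axioms, obtained by expanding $[\![ e_1 , [\![ e_2, f e_3 ]\!] ]\!]$ in two ways via (C1) and (C4) and comparing the coefficients of $e_3$, which forces the desired identity at the level of derivations on $C^{\infty}(M)$. I do not anticipate any real obstacle in this short verification; the only conceptual point is that the Courant bracket on $\mathbb{E}$ is itself not skew, and genuine skew-symmetry (and hence the honest Lie algebroid structure) emerges only after restriction to the Lagrangian subbundle $L$.
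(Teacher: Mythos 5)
Your proposal is correct: the paper states this remark without proof, as it is the classical fact from Liu--Weinstein--Xu, and your verification (involutivity gives closure, (C3) plus the Lagrangian condition gives skew-symmetry, (C1) then gives Jacobi, (C4) gives Leibniz, and the anchor identity follows by expanding $[\![ e_1 , [\![ e_2 , f e_3 ]\!] ]\!]$ two ways) is exactly the standard argument. No gaps.
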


\begin{exemplo}\label{exgraphdirac}
	In Example \ref{standardCA}, both $TM$ and $T^*M$ are Dirac structures in $\mathbb{T}M$.
\end{exemplo}

\begin{definicao}
    If $E_1 \to M_1$ and $E_2 \to M_2$ are vector bundles, a \textbf{VB-relation} $R : E_1 \dashrightarrow E_2$ is a subbundle $R \subseteq E_2 \times E_1$ along a submanifold $S \subseteq M_2 \times M_1$. Given Courant algebroids $\mathbb{E}_1\rightarrowtail M_1$ and $\mathbb{E}_2 \rightarrowtail M_2$, a \textbf{Courant relation} $R : \mathbb{E}_1 \dashrightarrow \mathbb{E}_2$ is a Dirac structure $R \subseteq \mathbb{E}_2 \times \overline{\mathbb{E}}_1$ along a submanifold $S \subseteq M_2 \times M_1$. If $S$ is the graph of a map $M_1 \rightarrow M_2$, then $R$ is called a \textbf{Courant morphism}.
\end{definicao}

\vspace{1mm}

 Given a VB-relation $R : E_1 \dashrightarrow E_2$, we let $ann^\natural(R) : E_2^* \dashrightarrow E_1^*$ be the relation such that $(\mu_2, \mu_1) \in ann^\natural(R)$ if $\langle \mu_1, e_1 \rangle = \langle \mu_2, e_2 \rangle$ whenever $(e_2, e_1) \in R$. Thus, $(\mu_2, \mu_1) \in ann^\natural(R)$ if and only if $(\mu_2, -\mu_1) \in ann(R)$.

\begin{exemplo}\label{standardCR}
	If $S : M_1 \dashrightarrow M_2$ is a relation, then $R_S := TS \oplus ann^{\natural}(TS) \subseteq \mathbb{T}M_2 \times \overline{\mathbb{T}M_1}$ defines a Courant relation $ R_S : \mathbb{T}M_1 \dashrightarrow \mathbb{T}M_2$ (see \cite{li2012courant}).
\end{exemplo}

\vspace{1mm}

The relation between Courant algebroids and Lie bialgebroids is established by the following result from Liu-Weinstein-Xu \cite{liu1997manin}:
\begin{itemize}[left=10pt, topsep=3pt, itemsep=0.3em]
\item Given a Lie bialgebroid $(A,A^*)$, the direct sum $A\oplus A^*$ has a natural Courant algebroid structure such that $(A\oplus A^*, A, A^*)$ is a Manin triple;
\item Given a Manin triple $(\mathbb{E}, L_1, L_2)$, the Lie algebroids $L_1$ and $L_2$ form a Lie bialgebroid, where $L_2\simeq L_1^*$ via the pairing.
\end{itemize}

\vspace{2mm}

The next result extends \cite[Proposition 7.1]{liu1997manin}, with a similar proof.

\begin{proposicao}\label{proposicao}
{\em 	Let $(A \Rightarrow M, A^* \Rightarrow M)$ be a Lie bialgebroid. Then, the vector bundles $B \rightarrow N$ and $ann(B) \rightarrow N$ are Lie subalgebroids of $A \Rightarrow M$ and $A^* \Rightarrow M$, respectively, if and only if, $L := B \oplus ann(B) \rightarrow N$ is a Dirac structure on $A \oplus A^*$ with support on $N$.}
\end{proposicao}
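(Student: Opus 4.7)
The plan is to verify the three Dirac-with-support conditions for $L = B \oplus ann(B) \to N$ in the forward direction, then reverse each argument for the converse. The Lagrangian condition holds independently of any bracket data: $L$ has rank $\operatorname{rk} B + (\operatorname{rk} A - \operatorname{rk} B) = \operatorname{rk} A$, the expected rank of a Lagrangian subbundle of $A \oplus A^*$, and for $(b_i, \xi_i) \in L|_n$ with $n \in N$ the pairing $\tfrac{1}{2}(\xi_1(b_2) + \xi_2(b_1))$ vanishes by the definition of $ann(B)$. The anchor compatibility $\rho_{A \oplus A^*}(L) = \rho_A(B) + \rho_{A^*}(ann(B)) \subseteq TN$ is also contained in the Lie subalgebroid data. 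So essentially all the content lies in involutivity.

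For the forward direction of involutivity, I would take sections $e_i = X_i + \alpha_i \in \Gamma(A \oplus A^*; L)$, meaning $X_i|_N \in \Gamma(B)$ and $\alpha_i|_N \in \Gamma(ann(B))$, and use the explicit Courant bracket of the Drinfeld double,
$$
[\![ e_1, e_2 ]\!] = \bigl([X_1, X_2]_A + \mathcal{L}^{A^*}_{\alpha_1} X_2 - \iota_{\alpha_2} d_{A^*} X_1\bigr) + \bigl([\alpha_1, \alpha_2]_{A^*} + \mathcal{L}^A_{X_1} \alpha_2 - \iota_{X_2} d_A \alpha_1\bigr).
$$
The pure brackets $[X_1, X_2]_A$ and $[\alpha_1, \alpha_2]_{A^*}$ restrict over $N$ to sections of $B$ and $ann(B)$ respectively, by the subalgebroid hypotheses. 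For the cross terms in the $A$-component, I would test against an arbitrary $\beta \in \Gamma(A^*; ann(B))$: Cartan identities for $d_{A^*}, \mathcal{L}^{A^*}$ together with the bialgebroid derivation axiom let me expand $\langle \mathcal{L}^{A^*}_{\alpha_1} X_2 - \iota_{\alpha_2} d_{A^*} X_1, \beta \rangle$ into $A^*$-brackets among $\alpha_1, \alpha_2, \beta$ and $\rho_{A^*}$-derivatives of pairings $\langle X_i, \gamma \rangle$ with $\gamma \in \Gamma(A^*; ann(B))$. These pairings vanish identically on $N$, and since $\rho_{A^*}(ann(B)) \subseteq TN$ their derivatives along such vectors still vanish on $N$; the remaining $A^*$-brackets lie in $ann(B)$ on $N$ by assumption. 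Hence the $A$-component of $[\![ e_1, e_2 ]\!]$ is $B$-valued on $N$, and a symmetric computation handles the $A^*$-component.

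For the converse, assuming $L$ Dirac, I would specialise the Courant bracket to pure sections $X_i + 0$ with $X_i|_N \in \Gamma(B)$; the bracket then reduces to $[X_1, X_2]_A$, and involutivity forces $[X_1, X_2]_A|_N$ to lie in $L|_N \cap A|_N = B$. Combined with axiom (3), this makes $B$ a Lie subalgebroid of $A$, and the dual argument on sections $0 + \alpha_i$ does the same for $ann(B) \subseteq A^*$. The main obstacle I expect is the mixed-term analysis in the forward direction: the cross terms are not visibly $B$-valued on $N$, and the only route I see to that conclusion uses the full bialgebroid derivation axiom (not just the two Lie algebroid structures separately), as in the proof of \cite[Proposition 7.1]{liu1997manin}; once that pointwise identity is in hand, the extension from the $N = M$ case to general $N$ follows from the fact that every ingredient of the computation is local on $N$.
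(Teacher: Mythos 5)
Your proposal is correct and is essentially the argument the paper intends: the paper gives no written proof, stating only that Proposition \ref{proposicao} follows ``with a similar proof'' to \cite[Proposition 7.1]{liu1997manin}, and your adaptation --- checking the Lagrangian and anchor conditions directly, handling the pure bracket terms $[X_1,X_2]_A$ and $[\alpha_1,\alpha_2]_{A^*}$ by the subalgebroid hypotheses, and killing the cross terms by pairing against sections restricting to $ann(B)$ (resp.\ $B$) over $N$, where $\rho_{A^*}(ann(B))\subseteq TN$ and $\rho_A(B)\subseteq TN$ discard the derivative terms --- is exactly that adaptation. One minor correction: the cross-term computation needs only the Cartan calculus of each Lie algebroid separately (the Leibniz-type identities for $\mathcal{L}^{A^*}$ and $d_{A^*}$ paired against $\beta$), not the bialgebroid derivation axiom, which enters only in guaranteeing that $A\oplus A^*$ is a Courant algebroid in the first place.
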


\section{Lie bialgebroids over Lie algebroids}\label{section3}

In order to extend the Manin triple framework to double Lie bialgebroids, we must replace Lie bialgebroids with double Lie algebroids \cite{mackenzie2006ehresmann}, as briefly recalled below.

\vspace{1mm}

A (horizontal) \textbf{VB-algebroid} \cite{mackenzie9808081double} is a double vector bundle whose (horizontal) vector bundles are equipped with a Lie algebroid structure, 
\begin{eqnarray} \label{defvbalgebroid}
	\begin{tikzcd}[
		column sep={2em,between origins},
		row sep={1.3em,between origins},]
		\Omega & \Longrightarrow & H  \\
		\downarrow & & \downarrow \\
		A & \Longrightarrow & M,
	\end{tikzcd}
\end{eqnarray}
such that the (vertical) multiplication by scalars is by Lie algebroid maps, following the formulation in \cite{bursztyn2016vector}, which is equivalent to Mackenzie's original definition. The (vertical) dual of a (horizontal) VB-algebroid is naturally a (horizontal) VB-algebroid. A VB-algebroid is related by (horizontal) duality to a \textbf{Poisson double vector bundle}: a double vector bundle $\Omega$ equipped with a Poisson structure $\pi$ that is double linear, i.e. is linear with respect to both horizontal and vertical vector bundle structures. 

\vspace{1mm}

\begin{exemplo}
	The tangent bundle of any vector bundle $E \rightarrow M$ is a VB-algebroid, and if $(\pi, E) \rightarrow M$ is a Poisson vector bundle, its cotangent bundle is a VB-algebroid.
\end{exemplo}

\vspace{1mm}

\begin{exemplo}
    For a Lie algebroid $A \Rightarrow M$, $TA \Rightarrow TM$ and $T^*A \Rightarrow A^*$ are VB-algebroids over $A \Rightarrow M$. These are dual VB-algebroids in the sense just described, i.e. the Lie algebroids $T^*A \Rightarrow A^*$ and $(TA)^{*_{A}} \Rightarrow A^*$ are isomorphic (see \cite{bursztyn2016vector}).
\end{exemplo}

\vspace{1mm}

\begin{proposicao}(\cite[Corollary 3.4.4]{bursztyn2016vector})\label{drinfelddoubleevbalgebroid}
   {\em  Given a VB-algebroid $\Omega \Rightarrow H$ over $A \Rightarrow M$ and a Lie algebroid map $(\Phi, \phi) : (\tilde{A} \Rightarrow \tilde{M}) \to (A \Rightarrow M)$, then the pullback vector bundles $\Phi^*\Omega \Rightarrow \phi^*H$ form a VB-algebroid structure over $\tilde{A} \Rightarrow \tilde{M}$.}
\end{proposicao}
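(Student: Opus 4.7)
The plan is to construct the VB-algebroid structure on $\Phi^*\Omega$ in three steps. First, I observe that the fibered product $\Phi^*\Omega := \tilde{A} \times_A \Omega$ inherits a natural double vector bundle structure with sides $\tilde{A}$ and $\phi^*H = \tilde{M} \times_M H$, and core isomorphic to $\phi^*C$, where $C$ denotes the core of $\Omega$.

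Next, I equip $\Phi^*\Omega \Rightarrow \phi^*H$ with a Lie algebroid structure. The anchor is defined fiberwise by $(\tilde{a}, \omega) \mapsto (\rho_{\tilde{A}}(\tilde{a}), \rho_\Omega(\omega)) \in T\tilde{M} \times_{TM} TH \cong T(\phi^*H)$; the image indeed lies in the fiber product because $T\pi_H \circ \rho_\Omega = \rho_A \circ p_\Omega$ (the double vector bundle property of the anchor $\rho_\Omega$ of the VB-algebroid $\Omega$) and $T\phi \circ \rho_{\tilde{A}} = \rho_A \circ \Phi$ (the Lie algebroid morphism condition on $(\Phi, \phi)$). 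For the bracket, I use that sections of $\Phi^*\Omega \to \phi^*H$ decompose into linear and core parts, and that any linear section of $\Phi^*\Omega$ can be locally expressed in terms of $\Phi$-related sections of $\Omega$; I define the bracket on such pullback sections by $[\Phi^*X, \Phi^*Y] := \Phi^*[X, Y]_\Omega$ and extend $C^\infty(\phi^*H)$-linearly.

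Finally, the Lie algebroid axioms for $\Phi^*\Omega \Rightarrow \phi^*H$ follow from the corresponding axioms for $\Omega \Rightarrow H$ via the pullback relation, while the VB-algebroid compatibility—that scalar multiplication on $\Phi^*\Omega \to \tilde{A}$ acts by Lie algebroid maps—is inherited directly from the analogous property of $\Omega \Rightarrow H$, since every ingredient in the construction is obtained by pullback of a linear operation.

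I expect the main difficulty to lie in the second step, where the pullback bracket on $\Phi^*\Omega$ must be shown to be well-defined and to satisfy Jacobi. This requires careful handling of the decomposition into linear and core sections, together with explicit use of the fact that $(\Phi, \phi)$ preserves brackets of linear sections via the Lie algebroid morphism condition. A cleaner alternative is to characterize the VB-algebroid $\Omega$ via its dual linear Poisson structure on the vertical dual of $\Omega$ and to carry out the pullback at the Poisson level, where linearity of the relevant maps makes the compatibilities manifest.
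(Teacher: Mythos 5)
First, a point of comparison: the paper offers no proof of this proposition — it is imported verbatim from \cite[Corollary 3.4.4]{bursztyn2016vector} — so your proposal has to be judged against what a complete argument requires. Your Step 1 and the anchor in Step 2 are correct: $\Phi^*\Omega=\tilde{A}\times_A\Omega$ is a double vector bundle with sides $\tilde{A}$ and $\phi^*H$ and core $\phi^*C$, and the two compatibilities you invoke ($Tq_H\circ\rho_\Omega=\rho_A\circ p_\Omega$ from the VB-algebroid structure and $T\phi\circ\rho_{\tilde{A}}=\rho_A\circ\Phi$ from the morphism condition) do show that your proposed anchor lands in $T(\phi^*H)\cong T\tilde{M}\times_{TM}TH$.

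The gap is in the bracket, and it is twofold. (i) A Lie bracket cannot be extended ``$C^\infty(\phi^*H)$-linearly''; it must be extended by the Leibniz rule through the anchor, and the well-definedness of that extension — independence of how a section is presented in terms of generators — is precisely the mathematical content of the proposition. This is exactly the step you defer. (ii) More seriously, your generating claim fails for general Lie algebroid morphisms: a linear section of $\Phi^*\Omega\to\phi^*H$ covers an \emph{arbitrary} section $\tilde{\alpha}$ of $\tilde{A}$, and $\Phi\circ\tilde{\alpha}$ is merely a section of $A$ along $\phi$, which need not be expressible through sections of $A$ admitting $\Phi$-related lifts. For instance, take $\tilde{A}=\mathbb{R}^2\to\mathbb{R}=\tilde{M}$ and $A=\mathbb{R}\to\mathrm{pt}=M$, both with zero anchors and brackets, and $\Phi(t,v)=tv$; this is a Lie algebroid morphism, yet the only section of $\tilde{A}$ that is $\Phi$-related to a section of $A$ is the zero section, so pullbacks of $\Phi$-related pairs together with core sections do not generate the module of sections of $\Phi^*\Omega\to\phi^*H$. (Your scheme does go through for the map actually used in this paper, the diagonal $A\to A\times A$, where relatable sections abound.) A robust proof should define the bracket on the full generating set — core sections and \emph{all} linear sections — for example by choosing a splitting, reading the VB-algebroid as a $2$-term representation up to homotopy of $A\Rightarrow M$ on $C\to H$ (Gracia-Saz--Mehta), pulling this data back along the Lie algebroid morphism $(\Phi,\phi)$ to a representation up to homotopy of $\tilde{A}$ on $\phi^*C\to\phi^*H$, and checking independence of the splitting; your suggested dual Poisson route is also viable but requires identifying $(\Phi^*\Omega)^{*_{\phi^*H}}$, which is not itself a naive pullback of $\Omega^{*_H}$.
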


\begin{obs}\label{dualvbalgbd}
    Given $\Omega$ a VB-algebroid as in \eqref{defvbalgebroid}, a double vector subbundle $\Omega' \subseteq \Omega$ is a VB-subalgebroid if and only if its annihilator 
    $$
    \begin{tikzcd}[
		column sep={2em,between origins},
		row sep={1.3em,between origins},]
		ann(\Omega') \,\,\,\,\,\,\,\,\,\,\,\, & \Longrightarrow & \,\,\,\,\,\,\,\,\,\,\,\, ann(C')  \\
		\downarrow \,\,\,\,\,\,\,\,\,\,\,\, & & \,\,\,\,\,\,\,\,\,\,\,\, \downarrow \\
		A' \,\,\,\,\,\,\,\,\,\,\,\, & \Longrightarrow & \,\,\,\,\,\,\,\,\,\,\,\, M',
	\end{tikzcd}
    $$
    where $C'$ is the core of $\Omega'$, is a VB-subalgebroid of $\Omega^{*_A}$ (see \cite{bursztyn2016vector}).
\end{obs}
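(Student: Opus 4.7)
The plan is to deduce this from the classical Mackenzie--Xu correspondence between Lie subalgebroids and coisotropic submanifolds of the dual Poisson vector bundle, applied to both VB-algebroid structures and linked by the standard iterated duality of double vector bundles.

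First I would verify the purely linear-algebraic content of the diagram: for a sub-DVB $\Omega' \subseteq \Omega$ with sides $H' \subseteq H$, $A' \subseteq A$, and core $C' \subseteq C$, the fiberwise annihilator $ann(\Omega') \subseteq \Omega^{*_A}$ is automatically a sub-DVB with sides $ann(C') \subseteq C^*$ and $A' \subseteq A$, and core $ann(H') \subseteq H^*$. This is a dimension count combined with the general duality theory of double vector bundles, independent of any Lie algebroid structure.

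Next I would pass to the Poisson picture from two directions. Horizontal duality converts the VB-algebroid $\Omega \Rightarrow H$ into a double-linear Poisson structure on $\Omega^{*_H} \to H$ in the Mackenzie--Xu sense. By the canonical iso of double vector bundles $\Omega^{*_H} \cong (\Omega^{*_A})^{*_{C^*}}$ (see \cite{mackenzie2005general}), this same Poisson manifold is the horizontal dual of $\Omega^{*_A} \Rightarrow C^*$, so the VB-algebroid structures on $\Omega \Rightarrow H$ and on $\Omega^{*_A} \Rightarrow C^*$ share a common ambient Poisson double vector bundle and are Mackenzie--Xu dual to each other. I would then apply the coisotropic criterion on both sides: $\Omega' \subseteq \Omega$ is a VB-subalgebroid iff $ann(\Omega') \subseteq \Omega^{*_H}$ is coisotropic, and $ann(\Omega') \subseteq \Omega^{*_A}$ is a VB-subalgebroid iff its further annihilator inside $(\Omega^{*_A})^{*_{C^*}}$ is coisotropic. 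The key computation is that under the above iso the submanifold $ann(\Omega') \subseteq \Omega^{*_H}$ corresponds to $ann(ann(\Omega')) \subseteq (\Omega^{*_A})^{*_{C^*}}$, so the two coisotropic conditions assert the same property of a single submanifold of a single Poisson manifold.

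The main obstacle will be keeping the DVB conventions straight: there are several natural but non-identical identifications among the iterated duals of a double vector bundle, and one needs the specific iso $\Omega^{*_H} \cong (\Omega^{*_A})^{*_{C^*}}$ that intertwines single annihilation with double annihilation, together with the matching sign conventions in the Mackenzie--Xu correspondence. The Lie-theoretic content is the classical coisotropic characterization, so all genuine effort lies in this diagram chase, which is carried out in detail in \cite{bursztyn2016vector}.
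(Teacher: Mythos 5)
Your proposal is correct and takes essentially the route that the paper implicitly relies on: the remark is stated without proof beyond the citation of \cite{bursztyn2016vector}, and the argument there likewise runs through the equivalence ``VB-subalgebroid of $\Omega \Rightarrow H$ $\iff$ coisotropic annihilator in $\Omega^{*_H}$'', the canonical pairing isomorphism $Z_H : \Omega^{*_H} \to (\Omega^{*_A})^{*_{C^*}}$, and the identity $Z_H(ann_H(\Omega')) = ann_{C^*}(ann_A(\Omega'))$. The sign ambiguity you flag is harmless here, since $Z_H$ intertwines the two double-linear Poisson structures up to sign and coisotropicity is insensitive to replacing $\pi$ by $-\pi$.
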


\vspace{1mm}

A \textbf{double Lie algebroid} \cite{mackenzie2006ehresmann} is a double vector bundle $\Omega$ which is both a horizontal and a vertical VB-algebroid,
	\begin{eqnarray}\label{doublealgebroid}
	\begin{tikzcd}[
		column sep={2em,between origins},
		row sep={1.3em,between origins},]
		\Omega  & \Longrightarrow & H  \\
		\Downarrow & & \Downarrow \\
		A & \Longrightarrow & M,
	\end{tikzcd}
\end{eqnarray}
whose (vertical) dual is a Poisson VB-algebroid. A double Lie algebroid is related by duality to a \textbf{Poisson VB-algebroid}: a VB-algebroid $\Omega$, as in \eqref{defvbalgebroid}, with a Poisson structure $\pi \in \mathfrak{X}(\Omega)$, such that $(\Omega, \pi) \Rightarrow H$ is a Poisson algebroid and $(\Omega, \pi) \rightarrow A$ is a Poisson vector bundle. We also refer to Poisson VB-algebroids as \textbf{PVB-algebroids}. 

\begin{examples}
	Given a Lie algebroid $A \Rightarrow M$, its tangent bundle is naturally a double Lie algebroid. Also, the VB-algebroid $T^*A \Rightarrow A^*$ over $A \Rightarrow M$ together with the canonical symplectic structure is a PVB-algebroid. If $(A, \pi) \Rightarrow M$ is a Poisson algebroid, then $TA \Rightarrow TM$ is a PVB-algebroid with respect to the tangent lift of $\pi$.
\end{examples}

\vspace{1mm}

\begin{definicao}
	Given a double vector bundle $\Omega$, we say that $(\Omega, \Omega^*)$ is a \textbf{double Lie bialgebroid} if $\Omega$ and $\Omega^{*_A}$ are double Lie algebroids,
	\begin{eqnarray} \label{doublebialgebroid}
	\begin{tikzcd}[
		column sep={2em,between origins},
		row sep={1.3em,between origins},]
		\Omega &\Longrightarrow & H \\
		\Downarrow & & \Downarrow \\
		A & \Longrightarrow & M, 
	\end{tikzcd}
	\,\,\,\,\,\,\,\,\,\,\,\,\,\,\,
	\begin{tikzcd}[
		column sep={2em,between origins},
		row sep={1.3em,between origins},]
		\,\, \Omega^{*_A} \,\, & \Longrightarrow & \,\, C^* \\
		\Downarrow \,\, & & \Downarrow \\
		A \,\, & \Longrightarrow & M,
	\end{tikzcd}
    \end{eqnarray}
	such that the horizontal VB-algebroids are in duality and the vertical Lie algebroids are compatible in the sense that $(\Omega \Rightarrow A, \Omega^{*_A} \Rightarrow A)$ is a Lie bialgebroid.
\end{definicao}

It is a direct consequence of the duality between double Lie algebroids and Poisson VB-algebroids that a double Lie bialgebroid $(\Omega, \Omega^*)$ is equivalent to a \textbf{Poisson double algebroid}: a double Lie algebroid $\Omega$ equipped with a Poisson structure $\pi \in \mathfrak{X}^2(\Omega)$ such that $\Omega \Rightarrow H$ and $\Omega \Rightarrow A$ are Poisson algebroids.

\begin{exemplo}
	A Poisson structure $\pi$ on a Lie algebroid $A \Rightarrow M$ defines a Poisson algebroid (or equivalently, a Lie bialgebroid $(A, A^*)$) if and only if $T^*A$ is a double Lie algebroid \cite[Thm 4.1]{mackenzie2006ehresmann}. This \textit{cotangent double of a Lie bialgebroid} is compatible with the canonical symplectic structure on $T^*A$, hence it is a Poisson double algebroid.
\end{exemplo}

\section{Manin triples for double Lie bialgebroids}\label{section4}

In this section, we provide a characterization of double Lie bialgebroids in terms of Manin triples. We start by recalling the notion of Courant algebroids over Lie algebroids from \cite{li2012courant}.

\subsection{LA-Courant algebroids}

In order to define Courant algebroids over Lie algebroids, we start by defining Courant algebroids over vector bundles: a \textbf{VB-Courant algebroid} is a double vector bundle
	$$
	\begin{tikzcd}[
		column sep={2em,between origins},
		row sep={1.4em,between origins},]
		\mathbb{E} & \longrightarrow & H  \\
		\downarrowtail & & \downarrow \\
		E & \longrightarrow & M
	\end{tikzcd}
	$$
such that  $\mathbb{E} \rightarrowtail E$ is a Courant algebroid and $gr(+_{\mathbb{E}/H}) \subseteq \mathbb{E} \times \overline{\mathbb{E} \times \mathbb{E}}$ is Dirac. A \textbf{VB-Dirac structure} on $\mathbb{E}$ is a double vector subbundle $L \rightarrow W$ over $E \rightarrow M$ such that $L \subseteq \mathbb{E}$ is a Dirac structure. 

\begin{exemplo}
	Given a Courant algebroid $\mathbb{E} \rightarrowtail M$, its tangent lift $T\mathbb{E} \rightarrowtail TM$ is naturally a Courant algebroid (see \cite{boumaiza2009relevement}), and fits into a VB-Courant algebroid over $\mathbb{E} \rightarrow M$ (see \cite[Proposition 3.4.1]{li2012courant}).
\end{exemplo}

\vspace{1mm}

Suppose that 
\begin{eqnarray}\label{lacourant}
	\begin{tikzcd}[
		column sep={2em,between origins},
		row sep={1.4em,between origins},]
		\mathbb{A} & \Longrightarrow & H  \\
		\downarrowtail & & \downarrow \\
		A & \Longrightarrow & M
	\end{tikzcd}
\end{eqnarray}
is both a VB-Courant algebroid and a VB-algebroid. Since it is a VB-algebroid, dualizing $\mathbb{A}$ over $H$ yields a double linear Poisson structure $\pi \in \mathfrak{X}(\mathbb{A}^{*_H})$, and therefore the map $\pi^{\sharp} : T^*(\mathbb{A}^{*_H}) \longrightarrow T\mathbb{A}^{*_H}$ defines the following morphism of triple vector bundles
$$
\begin{minipage}{.23\textwidth}
	\begin{tikzcd}[back line/.style={densely dotted}, row sep=0.6em, column sep=0.6em]
		& T^*\mathbb{A}^{*_H} \ar{dl}[swap]{} \ar{rr} \ar[back line]{dd}
		& & \mathbb{A} \ar{dd}{} \ar{dl}[swap,sloped,near start]{} \\
		\mathbb{A}^{*_H} \ar[crossing over]{rr}[near start]{} \ar{dd}[swap]{} 
		& & H \\
		& \mathbb{A}^{*_H*_{C^*}} \ar[back line]{rr} \ar[back line]{dl} 
		& & A \ar{dl} \\
		C^* \ar{rr} & & M \ar[crossing over, leftarrow]{uu}
	\end{tikzcd}
\end{minipage}  \qquad \qquad 
\begin{minipage}{.13\textwidth}
	\begin{tikzcd}
		\draw[->] (0,0,0) -- (1.6,0, 0) node[midway,above] {\pi^{\sharp}};
	\end{tikzcd}
\end{minipage}
\begin{minipage}{.34\textwidth}
	\begin{tikzcd}[back line/.style={densely dotted}, row sep=0.6em, column sep=0.6em]
		& T\mathbb{A}^{*_H} \ar{dl}[swap]{} \ar{rr} \ar[back line]{dd}
		& & TH \ar{dd}{} \ar{dl}[swap,sloped,near start]{} \\
		\mathbb{A}^{*_H} \ar[crossing over]{rr}[near start]{} \ar{dd}[swap]{} 
		& & H \\
		& TC^* \ar[back line]{rr} \ar[back line]{dl} 
		& & TM. \ar{dl} \\
		C^* \ar{rr} & & M \ar[crossing over, leftarrow]{uu}
	\end{tikzcd}
\end{minipage}
$$
By dualizing the above setup along the horizontal direction, one obtains a relation of triple vector bundles $\Pi_{\mathbb{A}} : (T^*\mathbb{A}^{*_H})^{*_\mathbb{A}} \dashrightarrow T\mathbb{A}$, where $\Pi_{\mathbb{A}} = ann^{\natural}(gr(\pi^{\sharp}))$. By using the reversal isomorphism (\cite[Theorem $5.5$]{mackenzie1994lie}), we can identify $(T^*\mathbb{A}^{*_H})^{*_\mathbb{A}} \simeq (T^*\mathbb{A})^{*_\mathbb{A}} = T\mathbb{A}$, and then $\Pi_{\mathbb{A}}$ is a relation between the following triple vector bundles:
$$
\begin{tikzcd}[back line/.style={densely dotted}, row sep=0.6em, column sep=0.6em]
	& T\mathbb{A} \ar{dl}[swap]{} \ar{rr} \ar[back line]{dd}
	& & \mathbb{A} \ar{dd}{} \ar{dl}[swap,sloped,near start]{} \\
	TH \ar[crossing over]{rr}[near start]{} \ar{dd}[swap]{} 
	& & H \\
	& TA \ar[back line]{rr} \ar[back line]{dl} 
	& & A \ar{dl} \\
	TM \ar{rr} & & M \ar[crossing over, leftarrow]{uu}
\end{tikzcd} \,\,\,
\begin{tikzcd}
	\draw[dashed, ->] (0,0,0) -- (1.8,0, 0) node[midway,above] {\Pi_{\mathbb{A}}};
\end{tikzcd}\,\,\,
\begin{tikzcd}[back line/.style={densely dotted}, row sep=0.6em, column sep=0.6em]
	& T\mathbb{A} \ar{dl}[swap]{} \ar{rr} \ar[back line]{dd}
	& & TH \ar{dd}{} \ar{dl}[swap,sloped,near start]{} \\
	\mathbb{A} \ar[crossing over]{rr}[near start]{} \ar{dd}[swap]{} 
	& & H \\
	& TA \ar[back line]{rr} \ar[back line]{dl} 
	& & TM. \ar{dl} \\
	A \ar{rr} & & M \ar[crossing over, leftarrow]{uu}
\end{tikzcd}
$$

\begin{definicao}
	An \textbf{LA-Courant algebroid} is a double vector bundle with both a VB-Courant algebroid structure and a VB-algebroid structure, as in \eqref{lacourant}, such that $\Pi_{\mathbb{A}} : T\mathbb{A} \dashrightarrow T\mathbb{A}$ is a Courant relation, where $T\mathbb{A} \rightarrowtail TA$ is the tangent prolongation of $\mathbb{A} \rightarrowtail A$ (see \cite{boumaiza2009relevement}). A VB-Dirac structure $L \subseteq \mathbb{A}$ is called an \textbf{LA-Dirac structure} if it is also a Lie subalgebroid of $\mathbb{A} \Rightarrow H$. If $L_1, L_2 \subseteq \mathbb{A}$ are transverse and LA-Dirac structures, we say that $(\mathbb{A}, L_1, L_2)$ is an \textbf{LA-Manin triple}.
\end{definicao}

\begin{exemplo}
	Given an LA-Courant algebroid $\mathbb{E} \rightarrowtail M$, Li-Bland \cite{li2012courant} showed that its Courant tangent prolongation $T\mathbb{E} \rightarrowtail TM$ is an LA-Courant algebroid. Also, if $L \subseteq \mathbb{E}$ is a Dirac structure, then $TL$ is an LA-Dirac structure.
\end{exemplo}

\vspace{1mm}

The next Proposition says that the compatibility between the bilinear form and the Lie algebroid structure on an LA-Courant algebroid can be expressed in terms of a Lie algebroid morphism.

\begin{proposicao}\label{algbmorf}(\cite[Proposition 5.1.1]{li2012courant})
   {\em  Suppose that $\mathbb{A}$ is an LA-Courant algebroid as in \eqref{lacourant}. Then the map $(\mathbb{A} \Rightarrow H) \rightarrow (\mathbb{A}^{*_A} \Rightarrow C^*)$ induced by the fiber metric is a Lie algebroid morphism.}
\end{proposicao}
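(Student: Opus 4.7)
Proof Plan.

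The strategy is to deduce the claim directly from the defining property of an LA-Courant algebroid, namely that the relation $R_{\mathbb{A}} : T\mathbb{A} \dashrightarrow T\mathbb{A}$ is a Courant relation. The Courant metric $\langle \cdot , \cdot \rangle$ on $\mathbb{A} \rightarrowtail A$ is linear in the $H$-direction (as part of the VB-Courant algebroid structure), so the musical isomorphism
\[
\psi : \mathbb{A} \to \mathbb{A}^{*_A}, \qquad e \mapsto \langle e, \cdot \rangle,
\]
is a morphism of double vector bundles, covering a map $\psi_H : H \to C^*$ on top. I want to show that $\psi$ intertwines the anchors and brackets of $\mathbb{A} \Rightarrow H$ and $\mathbb{A}^{*_A} \Rightarrow C^*$.

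First, I would reduce the claim to a graph condition: a vector bundle map between Lie algebroids is a Lie algebroid morphism if and only if its graph is a Lie subalgebroid of the product. Thus it suffices to show that $\mathrm{gr}(\psi) \subseteq \mathbb{A}^{*_A} \oplus \mathbb{A}$ is a Lie subalgebroid over $\mathrm{gr}(\psi_H) \subseteq C^* \oplus H$. Since $\psi$ is a morphism of double vector bundles, both the anchor and the bracket compatibilities can be checked on a generating set of $\Gamma(\mathbb{A};H)$, namely the linear and core sections; this is the standard reduction in the VB-algebroid setting (cf.\ \cite{bursztyn2016vector}).

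The core of the argument is to extract the desired compatibilities from the Dirac condition on $R_{\mathbb{A}}$. Recalling $R_{\mathbb{A}} = \mathrm{ann}^{\natural}(\mathrm{gr}(\pi^{\sharp}))$ for $\pi$ the linear Poisson structure on $\mathbb{A}^{*_H}$ (dual of $\mathbb{A} \Rightarrow H$), read through the reversal isomorphism $(T^*\mathbb{A}^{*_H})^{*_{\mathbb{A}}} \cong T\mathbb{A}$, the Lagrangian property of $R_{\mathbb{A}}$ encodes an algebraic compatibility between the metric and the VB-algebroid structure, which yields the anchor compatibility of $\psi$. The involutivity of $R_{\mathbb{A}}$ under the Courant bracket on $T\mathbb{A} \rightarrowtail TA$ encodes a derivation-type relation interlocking the Courant bracket on $\mathbb{A}$, the Lie algebroid bracket of $\mathbb{A} \Rightarrow H$, and the Poisson bracket on $\mathbb{A}^{*_H}$; dualizing via the reversal isomorphism, this relation is precisely the bracket compatibility of $\psi$ with respect to the Lie algebroid structures on $\mathbb{A} \Rightarrow H$ and on $\mathbb{A}^{*_A} \Rightarrow C^*$ (the latter being the structure whose linear Poisson structure on the dual is that of $\pi$ after reversal).

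The main obstacle is the last translation: carefully converting the Dirac condition on a relation of triple vector bundles, involving several duals and reversal isomorphisms, into a concrete statement on linear and core sections of $\mathbb{A}$ that matches the Lie algebroid morphism condition for $\psi$. Once this bookkeeping is done the proposition reduces to definition-chasing. An alternative, more conceptual route (in the spirit of \cite{li2012courant}) is to realize the LA-Courant algebroid as a degree-$2$ symplectic N-manifold with extra homological data: the morphism property of $\psi$ then follows from the fact that the symplectic form identifies the cohomological vector field on $\mathbb{A}$ with its ``dualization'' on $\mathbb{A}^{*_A}$.
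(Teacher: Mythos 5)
The paper does not actually prove this proposition: it is quoted from Li-Bland (\cite[Proposition 5.1.1]{li2012courant}), whose own argument runs through the graded (degree-$2$ symplectic) description of LA-Courant algebroids. So there is no in-paper argument to compare you against; the only question is whether your proposal stands on its own, and as written it does not. The preliminary reductions are fine: the metric-induced map $\psi:\mathbb{A}\to\mathbb{A}^{*_A}$ is a double vector bundle morphism covering some $\psi_H:H\to C^*$ because the pairing is linear over $H$, and reducing the morphism property to a graph condition checked on linear and core sections is a legitimate strategy. But the entire content of the proposition lies in the step you defer as ``bookkeeping'': converting the statement that $R_{\mathbb{A}}\subseteq T\mathbb{A}\times\overline{T\mathbb{A}}$ is a Dirac structure into the anchor and bracket compatibilities of $\psi$. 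You assert that the Lagrangian condition ``encodes'' the anchor compatibility and that involutivity ``is precisely'' the bracket compatibility, but you give no mechanism for either claim: nothing in the proposal explains how the identification $(T^*\mathbb{A}^{*_H})^{*_{\mathbb{A}}}\simeq T\mathbb{A}$, the reversal isomorphism, and the tangent-lifted pairing on $T\mathbb{A}$ interact so that $R_{\mathbb{A}}^{\perp}=R_{\mathbb{A}}$ becomes a statement about $\psi$ intertwining anchors, nor how the Courant bracket of sections of $T\mathbb{A}\times\overline{T\mathbb{A}}$ tangent to $R_{\mathbb{A}}$ unpacks into the bracket identity for $\psi$ on linear and core sections. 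Since $R_{\mathbb{A}}$ is by construction $ann^{\natural}(gr(\pi^{\sharp}))$ read through several dualizations of a triple vector bundle, the claimed dictionary is exactly what must be computed, and until it is, the argument is a roadmap rather than a proof.

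If you want to complete it along classical lines, the place where this translation is carried out explicitly (in a splitting of the triple vector bundle, on linear and core sections) is \cite{jotz2020courant}; alternatively, your ``conceptual route'' via degree-$2$ N-manifolds is essentially Li-Bland's actual proof, but it requires setting up the equivalence between LA-Courant algebroids and the corresponding graded data, which your proposal also does not supply.
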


\vspace{1mm}

\subsection{The relation in the definition of LA-Courant algebroids}\label{subsectiontherelation}

As seen in the previous section, the definition of an LA-Courant algebroid is intricate due to its formulation in terms of triple vector bundles; nevertheless, the relation involved in that definition can be considered even if we have only a Lie algebroid. We start this section by discussing this relation for Lie algebroids, and then we will introduce additional structures and observe what these new structures and their compatibilities yield in terms of this relation.

\vspace{3mm}

Let $A \Rightarrow M$ be a Lie algebroid with anchor $\rho$ and  $\pi^{\sharp}_{A \Rightarrow M} : T^*A^* \rightarrow TA^*$ its linear Poisson structure on $A^* \rightarrow M$. We associate with this Lie algebroid the following relation
\begin{eqnarray}\label{relation1}
	\Pi_A := \{ (V, W) \in TA \times TA \,\, \vert\ \, \langle V, \phi \rangle_A = \langle W, \pi^{\sharp}_{A \Rightarrow M} (\phi) \rangle_{TM} \,\, \forall \, \phi \in T^*A^* \}, \nonumber
\end{eqnarray}
where we consider the identifications $T^*A^* \simeq T^*A$ (given by the reversal isomorphism in \cite[Theorem $5.5$]{mackenzie1994lie}) and $TA^{*_M} \simeq (TA)^{*_{TM}}$, with $\langle \,\, , \, \rangle_{A}$ and $\langle \,\, , \, \rangle_{TM}$ denoting the pairings between $TA$ and $T^*A$, and between $TA$ and $(TA)^{*_{TM}}$, respectively. Note that, under the reversal isomorphism, $\Pi_A$ is identified with $ann^{\natural}(gr(\pi^{\sharp}_{A \Rightarrow M}))$. Indeed, the map $\pi^\sharp_{A \Rightarrow M}$ fits into the following diagram
$$
	\begin{tikzcd}[back line/.style={densely dotted}, row sep=0.8em, column sep=2.3em]
		& A^* \ar[back line]{rr}{id_{A^*}} \ar{dd}
		& & A^* \ar{dd}{}  \\
		T^*A^* \ar{ur}[swap]{} \ar[crossing over, back line, near end]{rr}{\pi^{\sharp}_{A \Rightarrow M}} \ar{dd}[swap]{} 
		& & TA^* \ar{ur}[swap,sloped,near start]{} \\
		& M \ar[back line]{rr}[near start]{id_M} 
		& & M, \\
		A \ar[back line]{rr}{\rho} \ar{ur} & & TM \ar{ur} \ar[crossing over, leftarrow]{uu}
	\end{tikzcd}
$$
where the map between the cores is $-\rho^* : T^*M \rightarrow A^*$, with $\rho^* : T^*M \rightarrow A^*$ the dual map of the anchor $\rho : A \rightarrow TM$. Then, dualizing $gr(\pi^{\sharp}_{A \Rightarrow M})$ on the vertical direction we get the double vector bundle
$$
	\begin{tikzcd}[
		column sep={3em,between origins},
		row sep={1.6em,between origins},]
		ann^{\natural}(gr(\pi^{\sharp}_{A \Rightarrow M})) \qquad \qquad & \longrightarrow & gr(-\rho) \\
		\downarrow \qquad \qquad & & \downarrow \\
		gr(\rho) \qquad \qquad & \longrightarrow & gr(id_M),
	\end{tikzcd}
$$
where we use that $ann^{\natural}(gr(-\rho^*)) = gr(-\rho)$. By considering the identifications $(T^*A^*)^{*_A}$ $\simeq (T^*A)^{*_A} \simeq TA$ and $(TA^{*_M})^{*_{TM}} \simeq TA$, and since the reversal isomorphism $T^*A^* \simeq T^*A$ in \cite[Theorem $5.5$]{mackenzie1994lie} induces $-id$ on the core, we get that the last 
double vector bundle displayed above becomes
$$
	\begin{tikzcd}[
		column sep={2.8em,between origins},
		row sep={1.6em,between origins},]
		\Pi_A & \longrightarrow & \,\, gr(\rho)  \\
		\downarrow & & \,\, \downarrow \\
		gr(\rho) & \longrightarrow & \,\, gr(id_M).
	\end{tikzcd}
$$
Note that the core of $\Pi_A$ is $gr(id_A) \simeq A$. Since in this case we can consider the annihilator in more than one direction, we will denote the above annihilator by 
$$
	ann^{\natural}_{A \times TM}(gr(\pi^{\sharp}_{A \Rightarrow M})).
$$
Evidencing the reversal isomorphism, the relation $\Pi_A$ can be written as
\begin{eqnarray}\label{relation2}
	\Pi_A := \{ (V, W) \in TA \times TA \,\, \vert\ \, \langle V, \psi \rangle_A = \langle W, \pi^{\sharp}_{A \Rightarrow M} (R^{-1}(\psi)) \rangle_{TM} \,\, \forall \, \psi \in T^*A \}, \nonumber
\end{eqnarray}
i.e., $\Pi_A = ann^{\natural}(gr(\pi^{\sharp} \circ R^{-1}))$, where $R^{-1} : T^*A \rightarrow T^*A^*$ is the inverse of the reversal isomorphism.

\vspace{1mm}

Now, given a VB-algebroid
$$
	\begin{tikzcd}[
		column sep={2em,between origins},
		row sep={1.3em,between origins},]
		\Omega & \Longrightarrow & H  \\
		\downarrow & & \downarrow \\
		A & \Longrightarrow & M,
	\end{tikzcd}
$$
we will understand in detail the triple vector bundle morphism given by the linear Poisson structure associated to the Lie algebroid $\Omega \Rightarrow H$, and the associated triple vector bundle $\Pi_\Omega$. Dualizing the double vector bundle $\Omega$ over $H$, we get the following triple vector bundle morphism:
$$
\begin{minipage}{.23\textwidth}
	\begin{tikzcd}[back line/.style={densely dotted}, row sep=0.6em, column sep=0.6em]
		& T^*\Omega^{*_H} \ar{dl}[swap]{} \ar{rr} \ar[back line]{dd}
		& & \Omega \ar{dd}{} \ar{dl}[swap,sloped,near start]{} \\
		\Omega^{*_H} \ar[crossing over]{rr}[near start]{} \ar{dd}[swap]{} 
		& & H \\
		& \Omega^{* _H*_{C^*}} \ar[back line]{rr} \ar[back line]{dl} 
		& & A \ar{dl} \\
		C^* \ar{rr} & & M \ar[crossing over, leftarrow]{uu}
	\end{tikzcd}
\end{minipage}  \qquad \qquad 
\begin{minipage}{.13\textwidth}
	\begin{tikzcd}
		\draw[->] (0,0,0) -- (1.6,0, 0) node[midway,above] {\pi_{\Omega}^{\sharp}};
	\end{tikzcd}
\end{minipage}
\begin{minipage}{.32\textwidth}
	\begin{tikzcd}[back line/.style={densely dotted}, row sep=0.6em, column sep=0.6em]
		& T\Omega^{*_H} \ar{dl}[swap]{} \ar{rr} \ar[back line]{dd}
		& & TH \ar{dd}{} \ar{dl}[swap,sloped,near start]{} \\
		\Omega^{*_H} \ar[crossing over]{rr}[near start]{} \ar{dd}[swap]{} 
		& & H \\
		& TC^* \ar[back line]{rr} \ar[back line]{dl} 
		& & TM. \ar{dl} \\
		C^* \ar{rr} & & M \ar[crossing over, leftarrow]{uu}
	\end{tikzcd}
\end{minipage}
$$
By writing the expression of the Poisson structure in coordinates and making the respective projections, we find that $\pi_{\Omega}^{\sharp}$ induces the following maps on the sides and core bundles:
\begin{multicols}{2}
  \begin{itemize}[left=10pt, itemsep=0.5em]
    \item $f_{D_1} = \rho_{\Omega} : \Omega \rightarrow TH$,
    \item $f_{D_2} = id_{\Omega^{*_H}} : \Omega^{*_H} \rightarrow \Omega^{*_H}$,
    \item $f_{D_3} = \rho_{\Omega^{*_H*_{C^*}}} : \Omega^{*_H*_{C^*}} \rightarrow TC^*$,
    \item $f_{V_1} = id_{C^*} : C^* \rightarrow C^*$,
    \item $f_{V_2} = \rho_{A} : A \rightarrow TM$,
    \item $f_{V_3} = id_H : H \rightarrow H$,
    \item $f_{M} = id_M : M \rightarrow M$,
    \item $f_{C_1} = - \rho^*_{\Omega^{*_H*_{C^*}}} : T^*C^* \rightarrow \Omega^{*_H}$,
    \item $f_{C_2} = \pi_A^{\sharp} : T^*A^* \rightarrow TA^*$,
    \item $f_{C_3} = -\rho^*_{\Omega} : T^*H \rightarrow \Omega^{*_H}$,
    \item $f_{K_1} = \rho_{\Omega}\vert_C : C \rightarrow H$,
    \item $f_{K_2} = id_{A^*} : A^* \rightarrow A^*$,
    \item $f_{K_3} = - (\rho_{\Omega}\vert_C)^* : H^* \rightarrow C^*$,
    \item $f_{C} = - \rho_{A}^* : T^*M \rightarrow A^*$,
  \end{itemize}
\end{multicols}
\noindent where the indices of the functions indicate which side or core we are referring to, following the convention for an arbitrary triple vector bundle as in Appendix \ref{apendicetriplevb}. 

\vspace{1mm}

Composing $\pi_{\Omega}^{\sharp}$ with the triple vector bundle morphism given by the inverse of the reversal isomorphism $R^{-1}$ (see Appendix \ref{apendicetriplevb}) and taking the dual over $\Omega \times TH$, we get that $\Pi_{\Omega}$ is the following triple vector bundle
$$
\begin{minipage}{.42\textwidth}
	\begin{tikzcd}[back line/.style={densely dotted}, row sep=0.6em, column sep=0.6em]
		& \Pi_{\Omega} \ar{dl}[swap]{} \ar{rr} \ar[back line]{dd}
		& & gr(\rho_{\Omega}) \ar{dd}{} \ar{dl}[swap,sloped,near start]{} \\
		gr(\rho_{\Omega}) \ar[crossing over]{rr}[near start]{} \ar{dd}[swap]{} 
		& & gr(id_H) \\
		& \Pi_A \ar[back line]{rr} \ar[back line]{dl} 
		& & gr(\rho_{A}), \ar{dl} \\
		gr(\rho_{A}) \ar{rr} & & gr(id_M) \ar[crossing over, leftarrow]{uu}
	\end{tikzcd}
\end{minipage}
$$
with ultracore $C = gr(id_C)$ and the following cores:
\begin{multicols}{2}
  \begin{itemize}[left=10pt, itemsep=0.5em]
    \item $C_1 = ann^{\natural}_{TM \times A}(gr(\rho_{\Omega^{* _A}}))$,
    \item $C_2 = ann^{\natural}_{A \times TM}(gr(\rho_{\Omega^{* _A}}))$,
    \item $C_3 = gr(id_\Omega)$,
    \item $K_1 = gr(\rho_{\Omega}\vert_C)$,
    \item $K_2 = gr(\rho_{\Omega}\vert_C)$,
    \item $K_3 = gr(id_A)$.
  \end{itemize}
\end{multicols} 

\vspace{1mm}

Consider the following VB-algebroids:
	$$
	\begin{tikzcd}[
		column sep={2em,between origins},
		row sep={1.3em,between origins},]
		\Omega & \Longrightarrow & H  \\
		\downarrow & & \downarrow \\
		A & \Longrightarrow & M,
	\end{tikzcd}
\qquad
	\begin{tikzcd}[
		column sep={2em,between origins},
		row sep={1.3em,between origins},]
		\Omega' & \Longrightarrow & H'  \\
		\downarrow & & \downarrow \\
		A & \Longrightarrow & M.
	\end{tikzcd}
	$$
We want to relate $\Pi_{\Omega \oplus_A \Omega'}$ and $\Pi_{\Omega} \oplus_{\Pi_A} \Pi_{\Omega'}$: 

\begin{proposicao}\label{propsumrelations}
	{\em Given VB-algebroids $\Omega$ and $\Omega'$ as above, the triple vector bundles $\Pi_{\Omega \oplus_A \Omega'}$ and $\Pi_\Omega \oplus_{\Pi_A} \Pi_{\Omega'}$ are canonically isomorphic.}
\end{proposicao}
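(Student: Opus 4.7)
The approach is to verify that every ingredient in the construction of $R_\bullet$—dualization over the side bundle, the linear Poisson structure, the reversal isomorphism, and the graph/annihilator operations—is natural with respect to fibered sums of VB-algebroids over $A \Rightarrow M$, so that $R_{\Omega \oplus_A \Omega'}$ is canonically isomorphic to $R_\Omega \oplus_{R_A} R_{\Omega'}$ simply by assembling these naturalities.

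First, I would establish the VB-algebroid isomorphism $(\Omega \oplus_A \Omega')^{*_{H \oplus_M H'}} \simeq \Omega^{*_H} \oplus_A \Omega'^{*_{H'}}$ and observe, using the splittings from Remark \ref{obssplitting}, that the bracket on sections of $\Omega \oplus_A \Omega' \Rightarrow H \oplus_M H'$ is the componentwise sum of the brackets on $\Omega$ and $\Omega'$. Consequently, the linear Poisson bivector $\pi_{\Omega \oplus_A \Omega'}$ decomposes accordingly, and under the canonical identifications
\[
T^*(E \oplus_A F) \simeq T^*E \oplus_{T^*A} T^*F, \qquad T(E \oplus_A F) \simeq TE \oplus_{TA} TF
\]
applied to $E = \Omega^{*_H}$ and $F = \Omega'^{*_{H'}}$, the anchor of the associated cotangent Lie algebroid satisfies
\[
\pi^{\sharp}_{\Omega \oplus_A \Omega'} = \pi^{\sharp}_\Omega \oplus_{\pi^{\sharp}_A} \pi^{\sharp}_{\Omega'}.
\]
Next, I would appeal to the naturality of the reversal isomorphism from \cite[Theorem 5.5]{mackenzie1994lie} to conclude that $R^{-1}_{\Omega \oplus_A \Omega'}$ matches $R^{-1}_\Omega \oplus_{R^{-1}_A} R^{-1}_{\Omega'}$ under the same identifications; hence the composite whose graph is annihilated to build $R_\bullet$ also splits as a fibered sum. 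Finally, the graph functor trivially satisfies $gr(f \oplus g) = gr(f) \oplus gr(g)$, and the operation $ann^\natural$ commutes with fibered sums because the pairing on a Whitney sum of dual pairs is the sum of the pairings. Putting everything together yields the required isomorphism of triple vector bundles.

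The main difficulty here is not conceptual but bookkeeping-theoretic: each identification above must respect all three vector bundle structures simultaneously, and one must consistently track which fibered sum is taken over which base (among $M$, $A$, $TM$, $TA$, $R_A$, and the various duals). To manage this, I would fix double-vector-bundle splittings $\Omega \simeq H \oplus A \oplus C$ and $\Omega' \simeq H' \oplus A \oplus C'$ as in Remark \ref{obssplitting}, which induce a compatible splitting of $\Omega \oplus_A \Omega'$, and verify the decomposition directly on the list of side and core maps $f_{D_i}, f_{V_i}, f_{K_i}, f_{C_i}, f_C, f_M$ of $\pi_\Omega^\sharp$ recorded in the paragraph preceding the statement. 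In these split forms the $\Omega$- and $\Omega'$-contributions are manifestly decoupled, and the resulting isomorphism on the sides, cores, and ultracore of $R_{\Omega \oplus_A \Omega'}$ can be read off term by term.
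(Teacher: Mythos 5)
Your endgame is the same as the paper's---identify the two triple vector bundles by matching their sides, cores, and ultracore and then invoking the non-canonical splitting of Remark \ref{obssplitting}---but your route there is genuinely different. You propagate the fibered-sum decomposition through every stage of the construction (horizontal dual, linear Poisson anchor, reversal isomorphism, graph, annihilator), whereas the paper skips all intermediate naturality claims and simply writes down the candidate isomorphism at the level where $R_\bullet$ finally lives: the map
$F : T(\Omega\oplus_A\Omega')\times T(\Omega\oplus_A\Omega') \to (T\Omega\times T\Omega)\oplus_{TA\times TA}(T\Omega'\times T\Omega')$, $(V,W)\mapsto (dp_\Omega V, dp_\Omega W)\oplus(dp_{\Omega'}V, dp_{\Omega'}W)$,
then computes the maps $F$ induces on the sides and cores of the top double vector bundles and checks that these carry the sides and core of $R_{\Omega\oplus_A\Omega'}$ (already identified in \S\ref{subsectiontherelation} as graphs of anchors and annihilators) onto those of $R_\Omega\oplus_{R_A}R_{\Omega'}$. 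The paper's shortcut buys a much shorter verification; your version buys a conceptual explanation of \emph{why} the isomorphism exists, at the cost of more delicate bookkeeping.

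That bookkeeping is where you should be careful: your first identification, $(\Omega\oplus_A\Omega')^{*_{H\oplus_M H'}}\simeq \Omega^{*_H}\oplus_A\Omega'^{*_{H'}}$, does not parse as written, since $\Omega^{*_H}$ is a double vector bundle with sides $H$ and $C^*$ and core $A^*$ and admits no projection to $A$. Because $\Omega\oplus_A\Omega'\to H\oplus_M H'$ is a fibre product over $A$ rather than a Whitney sum over $H\oplus_M H'$, its dual over $H\oplus_M H'$ is not the naive sum of the duals: in a splitting it is $H\oplus H'\oplus C^*\oplus C'^*\oplus A^*$, so the two copies of the core $A^*$ of $\Omega^{*_H}$ and $\Omega'^{*_{H'}}$ must be identified. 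The same caveat applies to your use of $T^*(E\oplus F)$ with $E=\Omega^{*_H}$, $F=\Omega'^{*_{H'}}$, which do not sit over a common base $A$. Since you ultimately propose to verify everything in split form on the explicit list of side and core maps of $\pi^\sharp_\Omega$, these slips are repairable rather than fatal, but they are precisely the pitfalls that the paper avoids by defining its isomorphism only after all dualizations have been carried out.
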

\begin{proof}
Consider the following isomorphisms of triple vector bundles
\begin{eqnarray}
	F : T(\Omega \oplus_A \Omega') \times T(\Omega \oplus_A \Omega') &\longrightarrow& (T\Omega \times T\Omega) \oplus_{TA \times TA} (T\Omega' \times T\Omega') \nonumber \\
	(V, W) &\longmapsto& (dp_{\Omega}(V), dp_{\Omega}(W)) \oplus (dp_{\Omega'}(V), dp_{\Omega'}(W)), \nonumber
\end{eqnarray}
and
\begin{eqnarray}
	f : T(\Omega \oplus_A \Omega') &\longrightarrow& T\Omega \oplus_{TA} T\Omega' \nonumber \\
	V &\longmapsto& dp_{\Omega}(V) \oplus dp_{\Omega'}(V), \nonumber
\end{eqnarray}
where $p_\Omega : \Omega \oplus_A \Omega' \rightarrow \Omega$ and $p_{\Omega'} : \Omega \oplus_A \Omega' \rightarrow \Omega'$ are the usual projections. Since the Lie algebroid structure on $\Omega \oplus_A \Omega'$ is simply a restriction of the product Lie algebroid, the following diagram commutes
\[
\begin{tikzcd}[column sep=7.5em, row sep=large]
(T\Omega \oplus_{TA} T\Omega')^{*_{\Omega \oplus_A \Omega'}} \arrow[r,"\pi^{\sharp}_{\Omega} \circ R_{\Omega}^{-1} \times \pi^{\sharp}_{\Omega'} \circ R_{\Omega'}^{-1}"] \arrow[d,"f^{*_{\Omega \oplus \Omega'}}"'] & (T\Omega \oplus_{TA} T\Omega')^{*_{TH \oplus TH'}} \arrow[d,"f^{*_{T(H \oplus H')}}"] \\
T^*(\Omega \oplus_A \Omega') \arrow[r,"\pi^{\sharp}_{\Omega \oplus \Omega'} \circ R_{\Omega \oplus \Omega'}^{-1}"'] & T(\Omega \oplus_A \Omega')^{*_{H \oplus H'}}.
\end{tikzcd}
\]
Therefore, it is straightfoward to verify that the restriction of $F$ to $\Pi_{\Omega \oplus_A \Omega'}$ has image $\Pi_\Omega \oplus_{\Pi_A} \Pi_{\Omega'}$, providing the desired isomorphism.
\end{proof}

\vspace{2mm}

Now we want to relate  $\Pi_{\Omega}$ and $\Pi_{\Omega^{*_A}}$ when we have a VB-algebroid and its dual VB-algebroid as follows:
$$
	\begin{tikzcd}[
		column sep={2em,between origins},
		row sep={1.3em,between origins},]
		\Omega & \Longrightarrow & H  \\
		\downarrow & {\scriptstyle C} & \downarrow \\
		A & \Longrightarrow & M,
	\end{tikzcd}
\qquad
	\begin{tikzcd}[
		column sep={2em,between origins},
		row sep={1.3em,between origins},]
		\Omega^{*_A} & \Longrightarrow & C^*  \\
		\downarrow & {\scriptstyle H^*} & \downarrow \\
		A & \Longrightarrow & M.
	\end{tikzcd}
$$
We provide some insight into their expected relationship. Note that the triple vector bundle associated to $\Pi_{\Omega^{*_A}}$ is given by 
    $$
\begin{minipage}{.47\textwidth}
	\begin{tikzcd}[back line/.style={densely dotted}, row sep=0.6em, column sep=0.6em]
		& \Pi_{\Omega^{*_A}} \ar{dl}[swap]{} \ar{rr} \ar[back line]{dd}
		& & gr(\rho_{\Omega^{*_A}}) \ar{dd}{} \ar{dl}[swap,sloped,near start]{} \\
		gr(\rho_{\Omega^{*_A}}) \ar[crossing over]{rr}[near start]{} \ar{dd}[swap]{} 
		& & gr(id_{C^*}) \\
		& \Pi_A \ar[back line]{rr} \ar[back line]{dl} 
		& & gr(\rho_{A}). \ar{dl} \\
		gr(\rho_{A}) \ar{rr} & & gr(id_M) \ar[crossing over, leftarrow]{uu}
	\end{tikzcd}
\end{minipage}
$$
By using the duality theory of triple vector bundles (see Appendix \ref{apendicetriplevb}), and observing that $\Pi_{\Omega}$ has cores $C_1 = ann^{\natural}_{TM \times A}(gr(\rho_{\Omega^{* _A}}))$, $C_2 = ann^{\natural}_{A \times TM}(gr(\rho_{\Omega^{* _A}}))$ and ultracore $gr(id_C)$, we see that the triple vector bundle $ann^{\natural}_{TA \times TA}(\Pi_{\Omega})$ fits into the following diagram:
$$
\begin{minipage}{.69\textwidth}
	\begin{tikzcd}[back line/.style={densely dotted}, row sep=0.6em, column sep=0.6em]
		& ann^{\natural}_{TA \times TA} (\Pi_{\Omega}) \ar{dl}[swap]{} \ar{rr} \ar[back line]{dd}
		& & gr(\rho_{\Omega^{*_A}}) \ar{dd}{} \ar{dl}[swap,sloped,near start]{} \\
		gr(\rho_{\Omega^{*_A}}) \ar[crossing over]{rr}[near start]{} \ar{dd}[swap]{} 
		& & ann^{\natural}(gr(id_C)) \\
		& \Pi_A \ar[back line]{rr} \ar[back line]{dl} 
		& & gr(\rho_{A}). \ar{dl} \\
		gr(\rho_{A}) \ar{rr} & & gr(id_M) \ar[crossing over, leftarrow]{uu}
	\end{tikzcd}
\end{minipage}
$$
This discussion motivates the following precise relationship between $\Pi_{\Omega}$ and $\Pi_{\Omega^{*_A}}$.

\begin{proposicao}\label{proprelationdual}
   { \em Let $\Omega$ be a VB-algebroid and $\Omega^{*_A}$ its dual VB-algebroid as above. Under the natural isomorphism $T\Omega^{*_A} \;\simeq\; (T\Omega)^{*_{TA}}$, the triple vector subbundles $\Pi_{\Omega^{*_A}} \subseteq T\Omega^{*_A} \times T\Omega^{*_A}$ and $ann^{\natural}_{TA \times TA} (\Pi_{\Omega}) \subseteq (T\Omega)^{*_{TA}} \times (T\Omega)^{*_{TA}}$ are equal.
   }
\end{proposicao}
\begin{proof}
	Let $\pi_\Omega$ and $\pi_{\Omega^{*_A}}$ denote the Poisson structures on the double vector bundles $\Omega^{*_H}$ and $\Omega^{*_A*_{C^*}}$, respectively. These structures are related by the following equation
    \[
    \pi_{\Omega^{*_A}}^{\sharp} = dZ_H \circ \pi_\Omega^{\sharp} \circ (dZ_H)^{*_{\Omega^{*_H}}},
    \]
    where $dZ_H$ is the tangent lift of the isomorphism $Z_H : \Omega^{*_H} \to \Omega^{*_A*_{C^*}}$ (see Appendix \ref{apendicetriplevb}). Also, we denote by $R_{\Omega \to H}$ and $R_{\Omega^{*_A} \to C^*}$ the reversal isomorphisms arising from the indicated vector bundles. Given $(\mu, \nu) \in \Pi_{\Omega^{*_A}}$, for all $\beta \in T^*\Omega^{*_A}$ and $\alpha := R_{\Omega \to H} \circ (dZ_H)^{*_{\Omega^{*_H}}} \circ R_{\Omega^{*_A} \to C^*}^{-1} (\beta) \in T^*\Omega$, we have
    \begin{eqnarray}
        \langle \mu, \beta \rangle_{\Omega^{*_A}} &=& \langle \nu, \pi_{\Omega^{*_A}}^{\sharp} \circ R_{\Omega^{*_A} \to C^*}^{-1} (\beta) \rangle_{TC^*} \nonumber \\
        &=& \langle \nu, \pi_{\Omega^{*_A}}^{\sharp} \circ ((dZ_H)^{*_{\Omega^{*_H}}})^{-1} \circ R_{\Omega \to H}^{-1} (\alpha) \rangle_{TC^*} \nonumber \\
        &=& \langle \nu, dZ_H \circ \pi_{\Omega}^{\sharp} \circ R_{\Omega \to H}^{-1} (\alpha) \rangle_{TC^*} \nonumber \\
        &=& \vertbold \nu, \pi_{\Omega}^{\sharp} \circ R_{\Omega \to H}^{-1} (\alpha) \vertbold \nonumber \\
        &=& \langle \pi_{\Omega}^{\sharp} \circ R_{\Omega \to H}^{-1} (\alpha), \tilde{V} \rangle_{TH} - \langle \nu, \tilde{V} \rangle_{TA}, \nonumber
    \end{eqnarray}
    for all $\tilde{V} \in T\Omega$ compatible with the respective projections, where in the last two equalities we use the definition of the isomorphism $Z_H$ (see Appendix \ref{apendicetriplevb}). Then, given $(U,V) \in \Pi_\Omega$, since $\langle U , \alpha \rangle_\Omega = \langle V, \pi_{\Omega}^{\sharp} \circ R_{\Omega \to H}^{-1} (\alpha) \rangle_{TH}$ for all $\alpha \in T^*\Omega$, we have
    \begin{eqnarray}
        \langle \nu, V \rangle_{TA} &=& \langle U , \alpha \rangle_\Omega - \langle \mu, \beta \rangle_{\Omega^{*_A}} \nonumber \\
        &=& \langle U , \alpha \rangle_\Omega - \langle \mu,  R_{\Omega^{*_A} \to C^*} \circ ((dZ_H)^{*_{\Omega^{*_H}}})^{-1} \circ R_{\Omega \to H}^{-1}(\alpha) \rangle_{\Omega^{*_A}} \nonumber \\
        &=& \langle U , \alpha \rangle_\Omega - \langle \mu,  -_{\Omega^{*_A}} R_{\Omega \to H}^{-1}(\alpha) \rangle_{\Omega^{*_A}} \nonumber \\
        &=& \langle U , \alpha \rangle_\Omega + \langle \mu, R_{\Omega \to H}^{-1}(\alpha) \rangle_{\Omega^{*_A}} \nonumber \\
        &=& \langle \mu, U \rangle_{TA}, \nonumber
    \end{eqnarray}
    where we use Proposition \ref{propreversalcommute} and \cite[Theorem 9.5.1]{mackenzie2005general} (see Appendix \ref{apendicetriplevb}). Therefore, $(\mu, \nu) \in ann^{\natural}_{TA \times TA}(\Pi_\Omega)$. The converse follows simply by reversing the preceding arguments. 
\end{proof}

\vspace{1mm}

Let $\Omega$ be a VB-algebroid and $\Omega^{*_A}$ its dual VB-algebroid. Consider the following isomorphisms of VB-algebroids:
\begin{eqnarray}\label{eq:phi}
\varphi: T\Omega^{*_A} \times T\Omega^{*_A} &\longrightarrow& T\Omega^{*_A} \times T\Omega^{*_A} \\
(\xi, \eta) &\longmapsto& (\xi, -\eta), \nonumber
\end{eqnarray}
and 
\begin{eqnarray}\label{eq:Phi}
\Phi : T(\Omega {\oplus} \Omega^{*_A}) {\times} T(\Omega {\oplus} \Omega^{*_A}) & {\longrightarrow} & (T\Omega {\times} T\Omega) {\oplus} (T\Omega^{*_A} {\times} T\Omega^{*_A}) \\
(V, W) & {\longmapsto} & (dp_\Omega(V), dp_\Omega(W)) {\oplus} (dp_{\Omega^{*_A}}(V), - dp_{\Omega^{*_A}}(W)), \nonumber
\end{eqnarray}
where the minus sign is from the vector bundle $T\Omega^{*_A} \rightarrow TA$, and $p_\Omega : \Omega \oplus_A \Omega^{*_A} \rightarrow \Omega$, $p_{\Omega^{*_A}} : \Omega \oplus_A \Omega^{*_A} \rightarrow \Omega^{*_A}$ are the usual projections. We will need the following result, that follows immediately from the definitions and from Proposition \ref{propsumrelations} and Proposition \ref{proprelationdual}.

\begin{proposicao}\label{prop:12}
    The following holds:
    \begin{itemize}[left=13pt, itemsep=0.3em]
\item[(a)] $\varphi(\Pi_{\Omega^{*_A}}) = ann_{TA \times TA}(\Pi_{\Omega});$
\item[(b)] $\Phi(\Pi_{\Omega \oplus_A \Omega^{*_A}}) = \Pi_\Omega \oplus_{\Pi_A} ann_{TA \times TA}(\Pi_\Omega).$
    \end{itemize}
\end{proposicao}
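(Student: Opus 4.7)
The plan is to deduce both parts directly from the definitions together with the two preceding propositions, the crux being the general identity $ann(R) = \varphi(ann^\natural(R))$ recalled in Section \ref{section2}.

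For (a), the definitions from Section \ref{section2} give that $(\xi,\eta)\in ann^\natural_{TA\times TA}(R_\Omega)$ if and only if $(\xi,-\eta)\in ann_{TA\times TA}(R_\Omega)$; since $\varphi$ acts precisely by negation of the second component, this reads as $\varphi(ann^\natural_{TA\times TA}(R_\Omega)) = ann_{TA\times TA}(R_\Omega)$, which is exactly the claim.

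For (b), the plan is to combine Propositions \ref{propsumrelations} and \ref{proprelationdual} with part (a). Specializing the isomorphism $F$ constructed in the proof of Proposition \ref{propsumrelations} to $\Omega' = \Omega^{*_A}$ yields
\[
F(R_{\Omega\oplus_A \Omega^{*_A}}) \;=\; R_\Omega \oplus_{R_A} R_{\Omega^{*_A}},
\]
where $F(V,W) = (dp_\Omega V, dp_\Omega W)\oplus (dp_{\Omega^{*_A}}V, dp_{\Omega^{*_A}}W)$. Proposition \ref{proprelationdual} then identifies $R_{\Omega^{*_A}}$ with $ann^\natural_{TA\times TA}(R_\Omega)$, so
\[
F(R_{\Omega\oplus_A \Omega^{*_A}}) \;=\; R_\Omega \oplus_{R_A} ann^\natural_{TA\times TA}(R_\Omega).
\]
Comparing with \eqref{eq:phi} and \eqref{eq:Phi}, one sees $\Phi = (\mathrm{id}\oplus \varphi)\circ F$, where $\varphi$ acts on the second $\Omega^{*_A}$-summand. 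Applying part (a) to this summand gives
\[
\Phi(R_{\Omega\oplus_A \Omega^{*_A}}) \;=\; R_\Omega \oplus_{R_A} \varphi(ann^\natural_{TA\times TA}(R_\Omega)) \;=\; R_\Omega \oplus_{R_A} ann_{TA\times TA}(R_\Omega),
\]
which is (b).

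The only subtle point will be bookkeeping of sign conventions: one must check that the "minus sign from the vector bundle $T\Omega^{*_A}\to TA$" appearing in the definition of $\Phi$ is the same negation realized by $\varphi$, and that the annihilator directions in Proposition \ref{proprelationdual} match the $TA\times TA$-directions in which $\varphi$ acts. Once these identifications are made explicit, no further computation is needed, which is consistent with the claim that the result follows immediately from the two previous propositions.
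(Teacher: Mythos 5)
Your proposal is correct and follows exactly the route the paper intends: the paper gives no separate proof, stating only that the result ``follows immediately by definition and from Proposition \ref{propsumrelations} and Proposition \ref{proprelationdual},'' and your argument is precisely the natural unpacking of that remark — part (a) from the identity $(\mu_2,\mu_1)\in ann^\natural(R) \Leftrightarrow (\mu_2,-\mu_1)\in ann(R)$ recalled in Section \ref{section2}, and part (b) from specializing $F$ to $\Omega'=\Omega^{*_A}$, invoking Proposition \ref{proprelationdual}, and applying $\varphi$ to the second summand. Your flagged sign-bookkeeping check is the right (and only) subtlety.
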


\vspace{2mm}

Now, we aim to understand what happens to $\Pi_{\Omega}$ when $\Omega$ is a double Lie algebroid:
$$
\begin{tikzcd}[
		column sep={2em,between origins},
		row sep={1.3em,between origins},]
		\Omega & \Longrightarrow & H  \\
		\Downarrow & & \Downarrow \\
		A & \Longrightarrow & M.
\end{tikzcd}
$$
In this case, we can consider $\Pi_\Omega$ in two directions, so we will denote $\Pi_{\Omega \Rightarrow H}$ to indicate that we are considering the relation with respect to the Lie algebroid $\Omega \Rightarrow H$. 

\begin{proposicao}\label{proprelationsubalgbd}
	{\em Given a VB-algebroid $\Omega$ with another VB-algebroid structure as above, $\Omega$ is a double Lie algebroid if and only if $\Pi_{\Omega \Rightarrow H} \Rightarrow \Pi_A$ is a Lie subalgebroid of $T\Omega \times T\Omega \Rightarrow TA \times TA$.}
\end{proposicao}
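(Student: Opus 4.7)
The plan is to translate, via dualization, the Lie subalgebroid condition on $R_\Omega$ into a Lie algebroid morphism condition on $\pi^{\sharp}_{\Omega \Rightarrow H}$, and then recognize this as the double Lie algebroid condition.

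First, I would verify that $R_\Omega$ is always a double vector subbundle of $T\Omega \times T\Omega$ over $R_A \subseteq TA \times TA$. This is immediate from the explicit description preceding the proposition: up to the reversal isomorphism, $R_\Omega = ann^{\natural}(gr(\pi^{\sharp}_{\Omega \Rightarrow H}))$, and the graph of a morphism of triple vector bundles (as well as its annihilator) is always a double vector subbundle of the ambient product.

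The central step is to apply the annihilator/subalgebroid duality of Remark \ref{dualvbalgbd} in the triple vector bundle setting. The product Lie algebroid $T\Omega \times T\Omega \Rightarrow TA \times TA$ is the product of two copies of the tangent lift of the vertical Lie algebroid $\Omega \Rightarrow A$. Passing to the appropriate VB-algebroid dual via the reversal isomorphism $T^{*}\Omega \simeq T^{*}\Omega^{*_H}$ identifies it with $T^{*}\Omega^{*_H} \times T\Omega^{*_H}$, equipped with the corresponding cotangent and tangent Lie algebroid structures induced by $\Omega \Rightarrow A$. Under these identifications, Remark \ref{dualvbalgbd} yields: $R_\Omega \Rightarrow R_A$ is a VB-subalgebroid of $T\Omega \times T\Omega \Rightarrow TA \times TA$ if and only if $gr(\pi^{\sharp}_{\Omega \Rightarrow H})$ is a VB-subalgebroid of $T^{*}\Omega^{*_H} \times T\Omega^{*_H}$ with respect to those structures.

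The latter condition says, by definition, that $\pi^{\sharp}_{\Omega \Rightarrow H} : T^{*}\Omega^{*_H} \to T\Omega^{*_H}$ is a Lie algebroid morphism for the structures inherited from the vertical Lie algebroid $\Omega \Rightarrow A$. By the Mackenzie--Xu correspondence recalled in Section \ref{section2}, applied in the VB-algebroid context, this is equivalent to $(\Omega^{*_H}, \pi_{\Omega \Rightarrow H})$ being a PVB-algebroid with respect to the VB-algebroid structure on $\Omega^{*_H}$ coming from $\Omega \Rightarrow A$. Finally, by the duality between double Lie algebroids and PVB-algebroids (using the symmetry between the horizontal and vertical roles of a double vector bundle), this is in turn equivalent to $\Omega$ being a double Lie algebroid.

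The main obstacle is the careful bookkeeping of all Lie algebroid and vector bundle structures on the triple vector bundles involved, and verifying that the annihilator duality, combined with the reversal isomorphism, correctly intertwines the subalgebroid conditions on both sides. Once these identifications are made precise, the proof reduces to the dualizations already worked out in the discussion preceding the proposition together with Remark \ref{dualvbalgbd}.
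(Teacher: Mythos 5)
Your proposal is correct and follows essentially the same route as the paper: both convert the subalgebroid condition on $R_{\Omega\Rightarrow H}$ into the condition that $gr(\pi^{\sharp}_{\Omega\Rightarrow H})$ is a Lie subalgebroid (equivalently, that $\pi^{\sharp}_{\Omega\Rightarrow H}$ is a Lie algebroid morphism, i.e.\ the Poisson VB-algebroid condition defining a double Lie algebroid) by combining the annihilator duality of Remark \ref{dualvbalgbd} with the fact that the reversal isomorphism is a VB-algebroid morphism. The only detail the paper makes explicit that you elide is the passage from $ann$ to $ann^{\natural}$, handled there by the Lie algebroid morphism $id_{T\Omega}\times(-_{TH}(id_{T\Omega}))$; this falls under the bookkeeping you already flag.
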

\begin{proof}
Note that, to get $\Pi_{\Omega \Rightarrow H}$, we consider $\Omega^{*_H}$ with the Poisson structure $\pi_{\Omega \Rightarrow H}$, and compose the following triple vector bundle morphism with the inverse of the reversal isomorphism
$$
\begin{minipage}{.23\textwidth}
	\begin{tikzcd}[back line/.style={densely dotted}, row sep=0.6em, column sep=0.6em]
		& T^*\Omega^{*_H} \ar{dl}[swap]{} \ar{rr} \ar[Rightarrow]{dd}
		& & \Omega \ar[Rightarrow]{dd}{} \ar{dl}[swap,sloped,near start]{} \\
		\Omega^{*_H} \ar[crossing over]{rr}[near start]{} \ar[Rightarrow]{dd}[swap]{} 
		& & H \\
		& \Omega^{* _H*_{C^*}} \ar[back line]{rr} \ar[back line]{dl} 
		& & A \ar{dl} \\
		C^* \ar{rr} & & M \ar[crossing over, Leftarrow]{uu}
	\end{tikzcd}
\end{minipage}  \qquad \qquad 
\begin{minipage}{.13\textwidth}
	\begin{tikzcd}
		\draw[->] (0,0,0) -- (1.6,0, 0) node[midway,above] {\pi_{\Omega \Rightarrow H}^{\sharp}};
	\end{tikzcd}
\end{minipage}
\begin{minipage}{.32\textwidth}
	\begin{tikzcd}[back line/.style={densely dotted}, row sep=0.6em, column sep=0.6em]
		& T\Omega^{*_H} \ar{dl}[swap]{} \ar{rr} \ar[Rightarrow]{dd}
		& & TH \ar[Rightarrow]{dd}{} \ar{dl}[swap,sloped,near start]{} \\
		\Omega^{*_H} \ar[crossing over]{rr}[near start]{} \ar[Rightarrow]{dd}[swap]{} 
		& & H \\
		& TC^* \ar[back line]{rr} \ar[back line]{dl} 
		& & TM, \ar{dl} \\
		C^* \ar{rr} & & M \ar[crossing over, Leftarrow]{uu}
	\end{tikzcd}
\end{minipage}
$$
and then we dualize on the horizontal direction, getting the following triple vector bundle
$$
\begin{minipage}{.42\textwidth}
	\begin{tikzcd}[back line/.style={densely dotted}, row sep=0.6em, column sep=0.6em]
		& \Pi_{\Omega} \ar{dl}[swap]{} \ar{rr} \ar[Rightarrow]{dd}
		& & gr(\rho_{\Omega}) \ar[Rightarrow]{dd}{} \ar{dl}[swap,sloped,near start]{} \\
		gr(\rho_{\Omega}) \ar[crossing over]{rr}[near start]{} \ar[Rightarrow]{dd}[swap]{} 
		& & gr(id_H) \\
		& \Pi_A \ar[back line]{rr} \ar[back line]{dl} 
		& & gr(\rho_{A}). \ar{dl} \\
		gr(\rho_{A}) \ar{rr} & & gr(id_M) \ar[crossing over, Leftarrow]{uu}
	\end{tikzcd}
\end{minipage}
$$
Since $\Omega$ is a double Lie algebroid, its dual $\Omega^{*_H}$ is a Poisson VB-algebroid with the Poisson structure $\pi_{\Omega \Rightarrow H}$, then $gr(\pi^{\sharp}_{\Omega \Rightarrow H}) \Rightarrow gr(\rho_{\Omega^{*_H*_{C^*}}})$ is a Lie subalgebroid of $T^*\Omega^{*_H} \times T\Omega^{*_H} \Rightarrow \Omega^{*_H*_{C^*}} \times TC^*$. Therefore, since the inverse of the reversal isomorphism is by definition a VB-algebroid morphism over $Z_A : \Omega^{A^*} \rightarrow \Omega^{* _H*_{C^*}}$ (see Appendix \ref{apendicetriplevb}) and by Remark \ref{dualvbalgbd}, 
$$
ann_{\Omega \times TH}(gr(\pi_{\Omega}^{\sharp} \circ R^{-1})) \Rightarrow ann_{A \times TM} (gr(\pi_{A}^{\sharp} \circ R^{-1}))
$$ 
is a Lie subalgebroid of $T\Omega \times T\Omega \Rightarrow TA \times TA$. Since 
$$
\begin{tikzcd}[
		column sep={2em,between origins},
		row sep={1.4em,between origins},]
		T\Omega \,\, & \longrightarrow & \,\, TH  \\
		\Downarrow \,\, & & \,\, \Downarrow \\
		TA \,\, & \longrightarrow & \,\, TM
\end{tikzcd}
$$
is a VB-algebroid, the multiplication by scalars over $TH$ is a Lie algebroid morphism, and then $-_{TH}(id_{T\Omega})$, over $-_{TM}(id_{TA})$, is a Lie algebroid morphism between $T\Omega \Rightarrow TA$ and itself. So, the map $id_{T\Omega} \times (-_{TH}(id_{T\Omega}))$ is a Lie algebroid morphism between $T\Omega \times T\Omega \Rightarrow TA \times TA$ and itself, and takes $ann_{\Omega \times TH}(gr(\pi_{\Omega}^{\sharp} \circ R^{-1}))$ to $\Pi_{\Omega \Rightarrow H} = ann^{\natural}_{\Omega \times TH}(gr(\pi_{\Omega}^{\sharp} \circ R^{-1}))$. Therefore, $\Pi_{\Omega \Rightarrow H} \Rightarrow \Pi_A$ is a Lie subalgebroid of $T\Omega \times T\Omega \Rightarrow TA \times TA$, as desired. The converse is proved simply by reversing the preceding arguments.
\end{proof}

The following Proposition will be useful.

\begin{proposicao}\label{La-Diracdoublealgbd}
    Given an LA-Dirac structure $L$ in an LA-Courant algebroid $\mathbb{A}$ as follows
    $$
    \begin{tikzcd}[
		column sep={2em,between origins},
		row sep={1.3em,between origins},]
		L & \Longrightarrow & W  \\
		\Downarrow & & \Downarrow \\
		A & \Longrightarrow & M
    \end{tikzcd}
    \quad \subseteq \quad
    \begin{tikzcd}[
		column sep={2em,between origins},
		row sep={1.3em,between origins},]
		\mathbb{A} & \Longrightarrow & H  \\
		\downarrowtail & & \downarrow \\
		A & \Longrightarrow & M,
	\end{tikzcd}
    $$
    the induced structure on $L$ makes it into a double Lie algebroid.
\end{proposicao}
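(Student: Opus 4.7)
The strategy is to verify that $L$ satisfies the definition of a double Lie algebroid by first identifying its two VB-algebroid structures and then applying Proposition \ref{proprelationsubalgbd} to check the compatibility condition via the relation $R_L$.

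The horizontal VB-algebroid structure $L \Rightarrow W$ over $A \Rightarrow M$ will be immediate from the LA-Dirac hypothesis: being simultaneously a double vector subbundle of $\mathbb{A}$ over $A \to M$ and a Lie subalgebroid of $\mathbb{A} \Rightarrow H$ makes $L$ a VB-subalgebroid of $\mathbb{A} \Rightarrow H$. For the vertical VB-algebroid structure $L \Rightarrow A$ over $W \Rightarrow M$, I first equip $W$ with a Lie algebroid structure over $M$: the anchor $\rho_L : L \to TA$ of the Dirac structure is the restriction of $\rho_{\mathbb{A}} : \mathbb{A} \to TA$, which is a morphism of double vector bundles (as the anchor of the VB-Courant algebroid $\mathbb{A} \rightarrowtail A$); its restriction to the side $W$ yields an anchor $\rho_W : W \to TM$, and the bracket on $\Gamma(W)$ is extracted from the bracket of linear sections of $L \Rightarrow A$ via a splitting of the double vector bundle (Remark \ref{obssplitting}). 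The VB-algebroid axioms for $L \Rightarrow A$ over $W \Rightarrow M$ then follow by tracking how the Dirac bracket interacts with the double vector bundle structure.

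The heart of the proof is the compatibility between the two structures. By Proposition \ref{proprelationsubalgbd}, it suffices to show that $R_{L \Rightarrow W}$ is a Lie subalgebroid of $TL \times TL \Rightarrow TA \times TA$. My key claim is the identity
\[
R_{L \Rightarrow W} = R_{\mathbb{A} \Rightarrow H} \cap (TL \times TL),
\]
viewed inside $T\mathbb{A} \times T\mathbb{A}$. This will follow from tracing how the linear Poisson structure on $L^{*_W}$ is obtained from $\pi_{\mathbb{A} \Rightarrow H}$ on $\mathbb{A}^{*_H}$ via the quotient map $\mathbb{A}^{*_H} \to L^{*_W}$, whose kernel $ann(L)$ is coisotropic precisely because $L$ is a Lie subalgebroid of $\mathbb{A} \Rightarrow H$; concretely, this is Poisson reduction applied to the map $\pi^\sharp$ entering the definition of $R$, combined with the reversal isomorphisms and dualizations reviewed in \S \ref{subsectiontherelation}.

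With this identity in hand, the conclusion will follow from two observations: first, $R_{\mathbb{A} \Rightarrow H}$ is a Lie subalgebroid of $T\mathbb{A} \times T\mathbb{A} \Rightarrow TA \times TA$, because by the LA-Courant algebroid axiom it is a Dirac structure in the Courant algebroid $T\mathbb{A} \times \overline{T\mathbb{A}}$ over $TA \times TA$, hence involutive; second, $TL \times TL$ is a Lie subalgebroid of the same Lie algebroid, by tangent functoriality applied to the Lie subalgebroid $L \Rightarrow A$ of $\mathbb{A} \rightarrowtail A$ (which holds since $L$ is Dirac). Consequently $R_{L \Rightarrow W}$, as a smooth intersection of two Lie subalgebroids, is itself a Lie subalgebroid. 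The main obstacle I anticipate is verifying the intersection identity together with the smoothness (cleanness) of the intersection, which requires careful bookkeeping of the triple-vector-bundle identifications involved in the definition of $R$; the LA-Dirac axioms should ensure the requisite transversality.
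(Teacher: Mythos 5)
Your proposal is correct and follows essentially the same route as the paper: reduce to Proposition \ref{proprelationsubalgbd} and deduce that $R_{L\Rightarrow W}$ is a Lie subalgebroid from the Courant-relation axiom on $R_{\mathbb{A}}$ (the paper states this step in one line, whereas you make explicit the identification of $R_{L\Rightarrow W}$ inside $R_{\mathbb{A}}\cap(TL\times TL)$ and the involutivity argument). One small caveat: $T\mathbb{A}\times T\mathbb{A}\Rightarrow TA\times TA$ is not itself a Lie algebroid (only a Courant algebroid), so the "intersection of two Lie subalgebroids" phrasing should be replaced by involutivity of $R_{\mathbb{A}}$ under the Courant bracket restricted to sections valued in the Dirac structure $TL\times TL$ — but this is exactly the content of the paper's argument.
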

\begin{proof}
    Since $L \rightarrow A$ is a Dirac structure on the Courant algebroid $\mathbb{A} \rightarrowtail A$, it is a Lie algebroid. By Proposition \ref{proprelationsubalgbd},  it suffices to show that $\Pi_{L \Rightarrow W} \Rightarrow \Pi_{A}$ is a Lie subalgebroid of $TL \times TL \Rightarrow TA \times TA$, which follows directly from the fact that $\Pi_{\mathbb{A}}$ is a Courant relation along $\Pi_A$.
\end{proof}

\subsection{Manin triples for double Lie bialgebroids}

The following notation will be useful in what follows. Given a Lie algebroid $A\Rightarrow M$ with anchor $\rho_A$ and bracket $[\cdot \,,\cdot]_A$, we denote by $A^{op}$ the \textit{opposite Lie algebroid}: the vector bundle $A\to M$ with anchor $-\rho_A$ and bracket $-[\cdot \,,\cdot]_A$. Note that for Manin triples $(\mathbb{E}, A, B)$ and $(\overline{\mathbb{E}}, A, B)$, the Lie algebroid structures on $A^*$, arising from its identification with $B$ through the respective pairings, are opposites of each other. If $\Omega$ is a double Lie algebroid as in \eqref{doublealgebroid}, we denote by $\Omega^{op_A}$ the double Lie algebroid given the same underlying double vector bundle of $\Omega$ with the Lie algebroid structures $\Omega^{op}$ over $A$ and the original one over $H$. 

\vspace{1mm}

Given a double Lie bialgebroid $(\Omega, \Omega^{*_A})$ as in \eqref{doublebialgebroid}, we know that $\Omega \oplus_A \Omega^{*_A} \rightarrowtail A$ is a Courant algebroid, and then its tangent lift $T(\Omega \oplus_A \Omega^*) \rightarrowtail TA$ is also a Courant algebroid. Therefore, we can consider the following product Courant algebroid
$$
T(\Omega \oplus_A \Omega^*) \times \overline{T(\Omega \oplus_A \Omega^*)}.
$$
Note that $(\mathbb{E}, L_1, L_2)$ is a Manin triple, where $\mathbb{E} = T(\Omega \oplus_A \Omega^*) \times \overline{T(\Omega \oplus_A \Omega^*)}$, $L_1 = T(\Omega \oplus 0) \times T(\Omega \oplus 0) \simeq T\Omega \times T\Omega$ and $L_2 = T(0 \oplus \Omega^*) \times T(0 \oplus \Omega^*) \simeq T\Omega^* \times T\Omega^*$. Furthermore, the identification between $\mathbb{E}$ and the Drinfeld double $L_1 \oplus L_1^*$, induced by the pairing of $\mathbb{E}$, is given by the map \eqref{eq:Phi}:
\begin{eqnarray}
\Phi : T(\Omega \oplus_A \Omega^*) \times \overline{T(\Omega \oplus_A \Omega^*)} & \longrightarrow & (T\Omega \times T\Omega) \oplus (T\Omega^* \times (T\Omega^*)^{op_{TA}}) \nonumber \\
(V, W) & \longmapsto & (dp_\Omega(V), dp_\Omega(W)) \oplus (dp_{\Omega^*}(V), - dp_{\Omega^*}(W)). \nonumber
\end{eqnarray}
Note also that the map \eqref{eq:phi} defines the following Lie algebroid morphism:
\begin{eqnarray}
\varphi: T\Omega^* \times T\Omega^* &\longrightarrow& T\Omega^* \times (T\Omega^*)^{op_{TA}} \nonumber \\
(\xi, \eta) &\longmapsto& (\xi, -\eta). \nonumber
\end{eqnarray}

\vspace{3mm}

Finally, we can prove our main Theorem.
\begin{teorema} \label{teorema2}
	{\em The Drinfeld double of a double Lie bialgebroid $(\Omega, \Omega^{*_A} )$ is an LA-Courant algebroid, so that $(\Omega \oplus \Omega^{*_A} , \Omega, \Omega^{*_A} )$ is an LA-Manin triple. Conversely, given an LA-Manin triple $(\mathbb{A}, L_1, L_2)$, $(L_1, L_2)$ is a double Lie bialgebroid, where $L_2$ is identified with $L_1^*$ via the pairing of $\mathbb{A}$.}
\end{teorema}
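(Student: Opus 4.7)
The plan is to treat the two directions separately, assembling the machinery prepared in the preceding subsections.

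For the forward direction, set $\mathbb{A} := \Omega \oplus_A \Omega^{*_A}$. The Liu-Weinstein-Xu theorem gives the Courant algebroid structure $\mathbb{A} \rightarrowtail A$ as the Drinfeld double of $(\Omega \Rightarrow A, \Omega^{*_A} \Rightarrow A)$, and the direct sum of $\Omega \Rightarrow H$ and $\Omega^{*_A} \Rightarrow C^*$ endows $\mathbb{A}$ with the horizontal Lie algebroid structure $\mathbb{A} \Rightarrow H \oplus C^*$. Checking that $\mathbb{A}$ is both a VB-Courant algebroid and a VB-algebroid reduces to verifying that the brackets and pairing are linear in the two vector bundle structures. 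The crux is then verifying that $R_{\mathbb{A}}$ is a Courant relation.

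For that, Proposition \ref{prop:12}(b) gives $\Phi(R_{\mathbb{A}}) = R_\Omega \oplus_{R_A} \mathrm{ann}_{TA \times TA}(R_\Omega)$, where $\Phi$ identifies $T\mathbb{A} \times \overline{T\mathbb{A}}$ with the Drinfeld double of the product Lie bialgebroid $(T\Omega \times T\Omega, T\Omega^{*_A} \times (T\Omega^{*_A})^{op_{TA}})$ over $TA \times TA$, obtained by tangent-lift and opposite flip. By Proposition \ref{proposicao}, it then suffices to show that both summands on the right are Lie subalgebroids of the respective factors. For $R_\Omega$ this is Proposition \ref{proprelationsubalgbd} applied to $\Omega$; for $\mathrm{ann}_{TA \times TA}(R_\Omega)$ one combines Propositions \ref{proprelationdual} and \ref{prop:12}(a) to write $\mathrm{ann}_{TA \times TA}(R_\Omega) = \varphi(R_{\Omega^{*_A}})$, and then applies Proposition \ref{proprelationsubalgbd} to $\Omega^{*_A}$ together with the fact that $\varphi$ is a Lie algebroid isomorphism onto the opposite-flipped factor. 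Thus $\mathbb{A}$ is an LA-Courant algebroid; $\Omega$ and $\Omega^{*_A}$ are manifestly LA-Dirac (Dirac in the Courant algebroid over $A$ and direct summands of the horizontal Lie algebroid), so $(\mathbb{A}, \Omega, \Omega^{*_A})$ is an LA-Manin triple.

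For the converse, let $(\mathbb{A}, L_1, L_2)$ be an LA-Manin triple. By Proposition \ref{La-Diracdoublealgbd}, each $L_i$ is a double Lie algebroid over $A$. Identifying $L_2 \simeq L_1^*$ via the pairing, the Liu-Weinstein-Xu theorem applied to the underlying Courant algebroid $\mathbb{A} \rightarrowtail A$ yields the Lie bialgebroid condition on $(L_1 \Rightarrow A, L_2 \Rightarrow A)$, which together with the double Lie algebroid structures is precisely the data of a double Lie bialgebroid. The main obstacle is entirely contained in the forward direction: tracking the triple-vector-bundle duality and confirming that $\Phi$ intertwines the product Courant structure with the Drinfeld double of the product bialgebroid, including the sign flip implemented by $\varphi$. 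Once this bookkeeping is settled, the theorem is a clean assembly of Propositions \ref{propsumrelations}, \ref{proprelationdual}, \ref{prop:12}, \ref{proprelationsubalgbd}, and \ref{La-Diracdoublealgbd}.
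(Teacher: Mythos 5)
Your forward direction is essentially the paper's own argument: identify the crux as showing $R_{\Omega\oplus_A\Omega^{*_A}}$ is a Courant relation, transport it via $\Phi$ (Proposition \ref{prop:12}(b)) to $R_\Omega\oplus_{R_A}ann_{TA\times TA}(R_\Omega)$, recognize the second summand as $\varphi(R_{\Omega^{*_A}})$ via Propositions \ref{proprelationdual} and \ref{prop:12}(a), apply Proposition \ref{proprelationsubalgbd} to both $\Omega$ and $\Omega^{*_A}$, and conclude with Proposition \ref{proposicao}. One point you gloss over: establishing that $\Omega\oplus_A\Omega^{*_A}$ is a VB-Courant algebroid is not merely ``verifying that the brackets and pairing are linear''; the defining condition is that $gr(+_{\mathbb{E}/H})$ be Dirac in $\mathbb{E}\times\overline{\mathbb{E}\times\mathbb{E}}$, and the paper obtains this non-trivially by viewing $\Omega\oplus_A\Omega^{*_A}$ as a CA-groupoid (with $\mathtt{s}=\mathtt{t}$ and multiplication the fiberwise sum over $H\oplus C^*$) and invoking \cite[Theorem 4.6]{mancur1}, while the VB-algebroid structure comes from Proposition \ref{drinfelddoubleevbalgebroid} via pullback along the diagonal. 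You should either supply such an argument or cite one; as written this step is asserted rather than proved.

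The genuine gap is in the converse. You check that each $L_i$ is a double Lie algebroid (Proposition \ref{La-Diracdoublealgbd}) and that $(L_1\Rightarrow A,\,L_2\Rightarrow A)$ is a Lie bialgebroid, and then claim this is ``precisely the data of a double Lie bialgebroid.'' It is not: the definition also requires that the \emph{horizontal} VB-algebroids be in duality, i.e.\ that the identification $L_2\simeq L_1^{*_A}$ furnished by the pairing carry the Lie algebroid $L_2\Rightarrow W_2$ to the dual VB-algebroid structure $L_1^{*_A}\Rightarrow C^*$ induced from $L_1\Rightarrow W_1$. This is an extra compatibility between the fiber metric of $\mathbb{A}$ and the top Lie algebroid $\mathbb{A}\Rightarrow H$, and it is exactly what Proposition \ref{algbmorf} provides: the metric-induced map $(\mathbb{A}\Rightarrow H)\to(\mathbb{A}^{*_A}\Rightarrow C^*)$ is a Lie algebroid morphism, hence restricts to identify the horizontal algebroid structures of $L_2$ and $L_1^{*_A}$. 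Without invoking this (or an equivalent statement), the converse is incomplete.
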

\begin{proof}
	Given a double Lie bialgebroid $(\Omega, \Omega^{*_A})$, its Drinfeld double is depicted as follows
$$
\begin{tikzcd}[
		column sep={2em,between origins},
		row sep={1.3em,between origins},]
		\,\,\,\, \Omega \oplus_A \Omega^{*_A} \,\,\,\,\,\,\,\,\,\,\,\,\,\,\,\,\,\,\, & \Longrightarrow & \,\,\,\,\,\,\,\,\,\,\, H \oplus C^*  \\
		\downarrowtail \,\,\,\,\,\,\,\,\,\,\,\,\,\,\,\,\,\,\, & & \,\,\,\,\,\,\,\,\, \downarrow \\
		A \,\,\,\,\,\,\,\,\,\,\,\,\,\,\,\,\,\,\, & \Longrightarrow & \,\,\,\,\,\,\,\,\, M,
\end{tikzcd}
$$
where $\Omega \oplus_A \Omega^{*_A} \rightarrowtail A$ is a Courant algebroid. Since $\Omega \times \Omega^{*_A}$ is a VB-algebroid and using the diagonal maps $A \rightarrow A \times A$ and $M \rightarrow M \times M$, by Proposition \ref{drinfelddoubleevbalgebroid} we conclude that $\Omega \oplus_A \Omega^{*_A}$ is a VB-algebroid. By considering $\Omega \oplus_A \Omega^{*_A}$ the horizontal VB-groupoid where $\mathtt{s} = \mathtt{t} = (q_H^{\Omega} \times q_{C^*}^{\Omega^{*_A}})\vert_{\Omega\oplus _{A} \Omega^{*_{A}}}$ and the multiplication is the sum on $\Omega \oplus_A \Omega^{*_A}$ over $H \oplus C^*$, by \cite[Theorem 4.6]{mancur1} this is a CA-groupoid, and therefore $\Omega \oplus_A \Omega^{*_A}$ is a VB-Courant algebroid. It remains to show that $\Pi_{\Omega \oplus_A \Omega^{*_A}}$ is a Courant relation. By Proposition \ref{prop:12} (b), and since $\Phi$ is a Courant algebroid morphism, showing that $\Pi_{\Omega \oplus_A \Omega^{*_A}}$ is Dirac on $T(\Omega \oplus_A \Omega^*) \times \overline{T(\Omega \oplus_A \Omega^*)}$ is equivalent to showing that $\Pi_\Omega \oplus_{\Pi_A} ann_{TA \times TA}(\Pi_\Omega)$ is Dirac on $(T\Omega \times T\Omega) \oplus (T\Omega^* \times (T\Omega^*)^{op_{TA}})$.
By Proposition \ref{proprelationsubalgbd} and Proposition \ref{proprelationdual} we know that $\Pi_{\Omega} \Rightarrow \Pi_A \text{\quad and \quad} \Pi_{\Omega^{*_A}} \simeq ann_{TA \times TA}^{\natural}(\Pi_\Omega) \Rightarrow \Pi_A$ are Lie subalgebroids of $T\Omega \times T\Omega \Rightarrow TA \times TA$ and $T\Omega^{*_A} \times T\Omega^{*_A} \Rightarrow TA \times TA$, respectively. Then, by Proposition \ref{prop:12} (a), and since $\varphi$ is a Lie algebroid morphism, $ann_{TA \times TA}(\Pi_\Omega)$ is a Lie subalgebroid of $T\Omega^* \times (T\Omega^*)^{op_{TA}}$. Therefore, it is immediate from Proposition \ref{proposicao} that $\Pi_{\Omega \oplus_A \Omega^{*_A}}$ is a Courant relation.

\vspace{1mm}

Conversely, consider $(L_1, L_2)$ two transverse LA-Dirac structures on an LA-Courant algebroid. By Proposition \ref{La-Diracdoublealgbd}, $L_1$ and $L_2$ are double Lie algebroids. By the theory of Manin triples for Lie bialgebroids \cite{liu1997manin}, the identification \( L_2^* \simeq L_1 \) makes the pair \( (L_1, L_2) \) into a Lie bialgebroid. Moreover, by Proposition~\ref{algbmorf}, under this identification, \( L_1 \) and \( L_2 \) are in duality as horizontal VB-algebroids. Therefore, \( (L_1, L_2) \) is a double Lie bialgebroid.
\end{proof}

\vspace{1mm}

\begin{exemplo}\label{exdoubletg}
    Let $(A \Rightarrow M, A^* \Rightarrow M)$ be a Lie bialgebroid and consider the double Lie bialgebroid $(TA, TA^*)$, as below. By Theorem \ref{teorema2}, its Drinfeld double is an LA-Courant algebroid as follows:
     $$
	\begin{tikzcd}[
		column sep={2em,between origins},
		row sep={1.4em,between origins},]
		TA \,\,\,\, & \Longrightarrow & A \\
		\Downarrow \,\,\,\, & & \Downarrow \\
		TM \,\,\,\, & \Longrightarrow & M
	\end{tikzcd}
	\, \oplus \,
	\begin{tikzcd}[
		column sep={2em,between origins},
		row sep={1.4em,between origins},]
		TA^* \,\,\,\, & \Longrightarrow & \,\, A^*  \\
		\Downarrow \,\,\,\, & &  \Downarrow \\
		TM \,\,\,\, & \Longrightarrow &  M
	\end{tikzcd}
	\,\,\, = \,\,\,
	\begin{tikzcd}[
		column sep={2em,between origins},
		row sep={1.4em,between origins},]
		TA \oplus TA^* \,\,\,\,\,\,\,\,\,\,\,\,\,\,\,\,\,\,\, & \Longrightarrow & \,\,\,\,\,\,\,\,\,\,\,\, A \oplus A^* \\
		\downarrowtail \,\,\,\,\,\,\,\,\,\,\,\,\,\,\,\,\,\,\, & & \,\,\,\,\,\,\,\,\,\, \downarrow \\
		TM \,\,\,\,\,\,\,\,\,\,\,\,\,\,\,\,\,\,\, &  \Longrightarrow & \,\,\,\,\,\,\,\,\,\, M,
	\end{tikzcd}
    $$
where the Courant algebroid structure on $TA \oplus TA^* \rightarrowtail TM$ is isomorphic to the tangent lift of the Courant algebroid $A \oplus A^* \rightarrowtail M$ (see \cite{boumaiza2009relevement}).
\end{exemplo}

\begin{exemplo}
Given a Poisson algebroid $(A \Rightarrow M, \pi)$, the tangent and cotangent bundles are double Lie algebroids in duality, and $(TA \Rightarrow A, T^*A \Rightarrow A)$ form a Lie bialgebroid. Then, its Drinfeld double is an LA-Courant algebroid:
$$
	\begin{tikzcd}[
		column sep={2em,between origins},
		row sep={1.3em,between origins},]
		TA \,\, & \Longrightarrow & \,\, TM \\
		\Downarrow \,\, & & \,\, \Downarrow \\
		A \,\, & \Longrightarrow & \,\, M
	\end{tikzcd}
	\, \oplus \,
	\begin{tikzcd}[
		column sep={2em,between origins},
		row sep={1.3em,between origins},]
		T^*A\,\,\,\, &\Longrightarrow & \,\, A^*  \\
		\Downarrow \,\,\,\, & & \Downarrow \\
		A \,\,\,\, & \Longrightarrow & M
	\end{tikzcd}
    \,\,\, = \,\,\,
	\begin{tikzcd}[
			column sep={2em,between origins},
			row sep={1.3em,between origins},]
			TA \oplus T^*A \,\,\,\,\,\,\,\,\,\,\,\,\,\,\,\,\,\,\,\,\, & \Longrightarrow & \,\,\,\,\,\,\,\,\,\,\,\,\,\,\,\,\,\, TM \oplus A^*  \\
			\downarrowtail \,\,\,\,\,\,\,\,\,\,\,\,\,\,\,\,\,\,\,\,\, & & \,\,\,\,\,\,\,\,\,\,\,\,\,\,\,\,\,\, \downarrow \\
			A \,\,\,\,\,\,\,\,\,\,\,\,\,\,\,\,\,\,\,\,\, & \Longrightarrow & \,\,\,\,\,\,\,\,\,\,\,\,\,\,\,\,\,\, M. \nonumber
	\end{tikzcd}
$$
 The Courant algebroid structure above is the standard one (Example \ref{standardCA}). See \cite{ten2019applications} for a detailed description of such an example in the language of graded manifolds.
\end{exemplo}

\begin{exemplo} (\cite[Example 3.3.3]{jotz2020courant})
    Given a double Lie algebroid $\Omega$, as in \eqref{doublealgebroid}, with core $C$, its dual $\Omega^{*_A}$ is also a double Lie algebroid, where the Lie algebroid structure on $\Omega^{*_A} \Rightarrow A$ is the trivial one, with $(\Omega \Rightarrow A, \Omega^{*_A} \Rightarrow A)$ a Lie bialgebroid. Therefore, $(\Omega, \Omega^{*_A})$ is a double Lie bialgebroid, and by Theorem \ref{teorema2}, its Drinfeld double is an LA-Courant algebroid as follows:
    $$
	\begin{tikzcd}[
		column sep={2em,between origins},
		row sep={1.4em,between origins},]
		\Omega \,\,\,\, & \Longrightarrow & H \\
		\Downarrow \,\,\,\, & & \Downarrow \\
		A \,\,\,\, & \Longrightarrow & M
	\end{tikzcd}
	\, \oplus \,
	\begin{tikzcd}[
		column sep={2em,between origins},
		row sep={1.4em,between origins},]
		\Omega^{*_A} \,\,\,\, & \Longrightarrow & \,\, C^*  \\
		\Downarrow \,\,\,\, & &  \Downarrow \\
		A \,\,\,\, & \Longrightarrow &  M
	\end{tikzcd}
	\,\,\, = \,\,\,
	\begin{tikzcd}[
		column sep={2em,between origins},
		row sep={1.4em,between origins},]
		\Omega \oplus \Omega^{*_A} \,\,\,\,\,\,\,\,\,\,\,\,\,\,\,\,\,\,\, & \Longrightarrow & \,\,\,\,\,\,\,\,\,\,\,\, H \oplus C^* \\
		\downarrowtail \,\,\,\,\,\,\,\,\,\,\,\,\,\,\,\,\,\,\, & & \,\,\,\,\,\,\,\,\,\, \downarrow \\
		A \,\,\,\,\,\,\,\,\,\,\,\,\,\,\,\,\,\,\, &  \Longrightarrow & \,\,\,\,\,\,\,\,\,\, M.
	\end{tikzcd}
    $$
\end{exemplo}

\section{Lie theory for Courant double structures given by Drinfeld doubles}\label{section5}

In this section, we highlight how our results provide a differentiation/integration result between particular cases of double structures involving Courant algebroids. 

\vspace{1mm}

A \textbf{Lie bialgebroid groupoid} \cite{bursztyn2021poisson} is a pair $(\Gamma, \Gamma^{*_G})$ of LA-groupoids \cite{mackenzie1992double} which are in duality as VB-groupoids,
\begin{eqnarray}\label{liebialgebroidgroupoid}
\begin{tikzcd}[
column sep={2em,between origins},
row sep={1.3em,between origins},]
\Gamma & \rightrightarrows & H  \\
\Downarrow & & \Downarrow \\
G & \rightrightarrows & M,
\end{tikzcd}
\,\,\,\,\,\,\,\,\,\,\,\,\,\,\,
\begin{tikzcd}[
column sep={2em,between origins},
row sep={1.3em,between origins},]
\Gamma^{*_G} & \rightrightarrows & C^*  \\
\Downarrow & & \Downarrow \\
G & \rightrightarrows & M,
\end{tikzcd}
\end{eqnarray}
and whose Lie algebroid structures $(\Gamma \Rightarrow G, \Gamma^{*_G} \Rightarrow G)$ form a Lie bialgebroid. In \cite[\S 5]{bursztyn2021poisson} it was proven that if $\Gamma$ is a source-connected LA-groupoid, then $(\Gamma, \Gamma^{*_G})$ is a Lie bialgebroid groupoid if and only if $(\Omega, \Omega^{*_A})$ is a double Lie bialgebroid, where $\Omega$ and $\Omega^{*_A}$ are the double Lie algebroids corresponding to the differentiation of the LA-groupoids $\Gamma$ and $\Gamma^{*_G}$. 

\vspace{1mm}

Given a multiplicative Manin triple $\mathcal{G} := \Gamma \oplus_G \Gamma^{*_G}$, the author proved in \cite[Theorem 4.6]{mancur1} that it corresponds to a Lie bialgebroid groupoid $(\Gamma, \Gamma^{*_G})$. By applying the Lie functor to the horizontal groupoid structures on the Lie bialgebroid groupoid, it automatically follows from the Lie theory of Poisson double structures \cite{bursztyn2021poisson} that its differentiation
$$
\begin{tikzcd}[
		column sep={2em,between origins},
		row sep={1.3em,between origins},]
		\Gamma & \rightrightarrows & H  \\
		\Downarrow & & \Downarrow \\
		G & \rightrightarrows & M
\end{tikzcd}
\, \oplus_G \,
\begin{tikzcd}[
		column sep={2em,between origins},
		row sep={1.3em,between origins},]
		\Gamma^{*_G} \,\,\, & \rightrightarrows & \,\, C^*  \\
		\Downarrow \,\,\, & & \Downarrow \\
		G \,\,\, & \rightrightarrows & M
\end{tikzcd}
\quad \overset{\text{Lie}}{\longmapsto} \quad
\begin{tikzcd}[
		column sep={2em,between origins},
		row sep={1.3em,between origins},]
		\Omega & \Longrightarrow & H  \\
		\Downarrow & & \Downarrow \\
		A & \Longrightarrow & M
\end{tikzcd}
\, \oplus_A \,
\begin{tikzcd}[
		column sep={2em,between origins},
		row sep={1.3em,between origins},]
		\Omega^{*_A} \,\,\, & \Longrightarrow & \,\, C^*  \\
		\Downarrow \,\,\, & & \Downarrow \\
		A \,\,\, & \Longrightarrow & M
\end{tikzcd}
$$
is a double Lie bialgebroid $(\Omega, \Omega^{*_A})$, and by Theorem \ref{teorema2}, it gives rise to an LA-Manin triple. The same happens with the integration: if $\mathbb{A} := \Omega \oplus_A \Omega^{*_A}$ is an LA-Manin triple whose top Lie algebroid $\Omega \Rightarrow H$ is integrable, then its source-simply-connected integration is a CA-groupoid given by the Drinfeld double of Lie bialgebroids groupoids. Therefore, CA-groupoids and LA-Courant algebroids given by Drinfeld doubles are related via differentiation/integration. 

\vspace{1mm}

The following diagram illustrates how to describe the Lie theory relating Courant double structures, bringing together our main result with \cite[Theorem 4.6]{mancur1} and the Lie theory of Poisson double structures \cite{bursztyn2021poisson}:
$$
\xymatrix@C=40pt{
\text{multiplicative Manin triples \,} \ar@{<->}[d]_{} \ar@{<-->}[r]^{Lie}&\text{\, LA-Manin triples} \ar@{<->}[d]^{}\\
\text{Lie bialgebroid groupoids \,}
\ar@{<->}[r]_{Lie}&\text{\, double Lie bialgebroids} }
$$

\vspace{2mm}

In what follows, we use the fact that, for any Lie bialgebroid groupoid $(\Gamma, \Gamma^{*_G})$, the groupoid $\Gamma$ is source-simply-connected if and only if its base groupoid $G$ is (see \cite[Remark 3.1.1]{bursztyn2016vector}). We summarize the discussion above in the following result. 

\begin{teorema}\label{teodifintdoubles}
    Let $(\Gamma, \Gamma^{*_G})$ be a Lie bialgebroid groupoid as in \eqref{liebialgebroidgroupoid}, with $G$ source-simply-connected, and let $(\Omega, \Omega^{*_A}) = \mathrm{Lie}(\Gamma, \Gamma^{*_G})$ denote its associated double Lie bialgebroid. Then the Lie functor establishes a one-to-one correspondence between multiplicative Manin triples $(\mathcal{G}, \Gamma, \Gamma^{*_G})$ and LA-Manin triples $(\mathbb{A}, \Omega, \Omega^{*_A})$.
\end{teorema}

\begin{exemplo}\label{expairgrpdinf}
    Let $(A \Rightarrow M, A^* \Rightarrow M)$ be a Lie bialgebroid with $M$ simply-connected. We can consider the Lie bialgebroid groupoid $(A \times A, A^* \times A^*)$ by equipping it with the structure of the pair groupoid. By \cite[Theorem 4.16]{mancur1}, its Drinfeld double is a CA-groupoid as follows:
    $$
	\begin{tikzcd}[
		column sep={2em,between origins},
		row sep={1.3em,between origins},]
		A \times A \,\,\,\,\,\,\,\,\,\,\, & \rightrightarrows & A \\
		\Downarrow \,\,\,\,\,\,\,\,\,\,\, & & \Downarrow \\
		M {\times} M \,\,\,\,\,\,\,\,\,\,\, & \rightrightarrows & M
	\end{tikzcd}
	\,\,\, \oplus \,\,\,
	\begin{tikzcd}[
		column sep={2em,between origins},
		row sep={1.3em,between origins},]
		A^* \times A^* \,\,\,\,\,\,\,\,\,\,\, & \rightrightarrows & A^*  \\
		\Downarrow \,\,\,\,\,\,\,\,\,\,\, & &  \Downarrow \\
		M {\times} M \,\,\,\,\,\,\,\,\,\,\, & \rightrightarrows &  M
	\end{tikzcd}
    \quad = \quad
    \begin{tikzcd}[
		column sep={2em,between origins},
		row sep={1.3em,between origins},]
		\mathbb{E} \times \mathbb{E} \,\,\,\,\,\,\,\,\,\,\, & \rightrightarrows & \,\,\,\,\,\,\,\,\,\, A \oplus A^* \\
		\downarrowtail \,\,\,\,\,\,\,\,\,\,\, & & \,\,\,\,\,\,\,\,\,\, \downarrow \\
		M {\times} M \,\,\,\,\,\,\,\,\,\,\, &  \rightrightarrows & \,\,\,\,\,\,\,\,\,\, M,
	\end{tikzcd}
	$$
    where $\mathbb{E}$ is the Courant algebroid $A \oplus A^* \rightarrowtail M$. Since the Lie algebroid of the pair groupoid $M \times M \rightrightarrows M$ is the tangent Lie algebroid $TM \Rightarrow M$ (\cite[\S3.5]{mackenzie2005general}) and the pair groupoid $M \times M \rightrightarrows M$ integrates $TM \Rightarrow M$ when $M$ is simply-connected, it follows from the above discussion that the previously described CA-groupoid and the LA-Courant algebroid in Example \ref{exdoubletg} are related via differentiation and integration.
\end{exemplo}

\appendix

\section{Double and triple vector bundles}\label{apendicetriplevb}

\subsection{Double vector bundles}\label{sectiondvb}

In this section, we recall the definition and some useful properties of double vector bundles. For more details see \cite{mackenzie2005general}.

\vspace{1mm}

\begin{definicao}\label{defdvb}
	A \textbf{double vector bundle} $(D, B, A, M)$ is a commutative diagram
	\begin{eqnarray}\label{doublevb}
	\begin{tikzcd}[
		column sep={2em,between origins},
		row sep={1.3em,between origins},]
		D & \longrightarrow & B  \\
		\downarrow & & \downarrow \\
		A & \longrightarrow & M,
	\end{tikzcd}
	\end{eqnarray}
	in which every arrow is a vector bundle and the two vector bundle structures on $D$ are compatible, in the sense that the structural maps (projection, zero section, fiberwise addition and multiplication by scalars) are vector bundle maps. Equivalently, $D$ as above is a double vector bundle if the horizontal and vertical scalar multiplications commute (see \cite{grabowski2009higher, bursztyn2016vector}).
\end{definicao}

The \textbf{core} of a double vector bundle is the subbundle $C \rightarrow M$ given by the kernel of the double vector bundle map $D \rightarrow B \oplus A$ given by the projections on the side bundles,
$$
C \hookrightarrow D \rightarrow B \oplus A
$$
where $C$ is regarded as a double vector bundle with trivial sides, and $B \oplus A$ is a double vector bundle with trivial core. This sequence splits (non-canonically), giving an isomorphism $D \simeq B \oplus A \oplus C$, called \textbf{splitting}, inducing the identity on the sides and core (see \cite{gracia2010lie}). 

\begin{exemplo}\label{tangcotangdvb}
	Given a vector bundle $E \rightarrow M$, its tangent and cotangent bundles are double vector bundles:
	$$
	\begin{tikzcd}[
		column sep={2em,between origins},
		row sep={1.3em,between origins},]
		TE \,\,\,\,\, & \longrightarrow & TM  \\
		\downarrow & & \downarrow \\
		E & \longrightarrow & M,
	\end{tikzcd}
\,\,\,\,\,\,\,\,\,\,
	\begin{tikzcd}[
		column sep={2em,between origins},
		row sep={1.3em,between origins},]
		T^*E & \longrightarrow & E^*  \\
		\downarrow & & \downarrow \\
		E & \longrightarrow & M.
	\end{tikzcd}
	$$
	The core of $TE$ is identified with $E$, and $T^*E$ has core $T^*M$.
\end{exemplo}

For a double vector bundle  $D$ as in \eqref{doublevb}, we can consider its vertical dual $D^{*_A} = D^{*_y} \rightarrow A$ and its horizontal dual $D^{*_B} = D^{*_x} \rightarrow B$, and both are again double vector bundles (with core bundles indicated in the middle of the following diagrams):
$$
\begin{tikzcd}[
		column sep={2em,between origins},
		row sep={1.3em,between origins},]
		D^{*_A} & \longrightarrow & C^*  \\
		\downarrow & {\scriptstyle B^*} & \downarrow \\
		A & \longrightarrow & M,
\end{tikzcd}
\qquad
\begin{tikzcd}[
		column sep={2em,between origins},
		row sep={1.3em,between origins},]
		D^{*_B} & \longrightarrow & B  \\
		\downarrow & {\scriptstyle A^*} & \downarrow \\
		C^* & \longrightarrow & M.
\end{tikzcd}
$$
To see more details about duals of double vector bundles, see \cite[\S 9.2]{mackenzie2005general}. The tangent and cotangent double vector bundles in Example \ref{tangcotangdvb} are related by vertical duality.

\vspace{1mm}

In \cite[\S 9.2]{mackenzie2005general} it was shown that there exists a natural pairing between $D^{*_B}$ and $D^{*_A}$, given by
\begin{eqnarray}
    \vertbold \phi, \psi \vertbold = \langle \phi , d \rangle_B - \langle \psi , d \rangle_A, \nonumber
\end{eqnarray}
where $\phi \in D^{*_B}$, $\psi \in D^{*_A}$ satisfy $q^{D^{*_B}}_{C^*}(\phi) = q^{D^{*_A}}_{C^*}(\psi)$, $d$ is any element of $D$ with $q^D_B(d) = q^{D^{*_B}}_B(\phi)$ and $q^D_A (d) = q^{D^{*_A}}_A(\psi)$, and $\langle \,\, , \, \rangle_B$, $\langle \,\, , \, \rangle_A$ denote the natural pairings between $D$ and $D^{*_B}$ or $D^{*_A}$, respectively. This pairing induces the following isomorphisms of double vector bundles:
\begin{eqnarray}
        Z_B : D^{*_B} \to D^{*_A*_{C^*}}, \quad \langle Z_B(\phi), \psi \rangle_{C^*} = \vertbold \phi, \psi \vertbold, \nonumber \\
        Z_A : D^{*_A} \to D^{*_B*_{C^*}}, \quad \langle Z_A(\psi), \phi \rangle_{C^*} = \vertbold \phi, \psi \vertbold. \nonumber
\end{eqnarray}

\vspace{1mm}

Given a vector bundle $E \to M$, by using the above isomorphisms for the double vector bundle $TE$ and the canonical pairing between $E$ and $E^*$, Mackenzie defined a map
$$
R : T^*(E^*) \to T^*E
$$
called the \textbf{reversal isomorphism}, and proved that it is an isomorphism of double vector bundles preserving the side bundles and inducing $-id$ on the core \cite[\S 9.5]{mackenzie2005general}. Further, in \cite[Theorem 9.5.1]{mackenzie2005general} it was shown that, for all $U \in TE, \mu \in TE^*, \beta \in T^*E^*$,
\[
\langle\!\langle \mu , U \rangle\!\rangle_{TM} = \langle R(\beta), U \rangle_E + \langle \beta, \mu \rangle_{E^*},
\]
where $\langle\!\langle \,\, , \, \rangle\!\rangle_{TM}$ denotes the tangent lift of the natural pairing between $E$ and $E^*$, and $\langle \,\, , \, \rangle_E$, $\langle \,\, , \, \rangle_{E^*}$ denote the natural pairings between the tangent and cotangent bundles of $E$ and $E^*$, respectively.

\subsection{Triple vector bundles}

The dual theory of triple vector bundles is essential for defining LA-Courant algebroids, and we utilize it multiple times in this paper. In this section, we recall the definition of triple vector bundles, some dual properties and important examples. For more details, see \cite{gracia2009duality} and \cite{mackenzie2005duality}.

\begin{definicao}
	A \textbf{triple vector bundle} is a cube as follows, such that each edge is a vector bundle and each face is a double vector bundle:
\begin{eqnarray}\label{triplevb}
    \begin{minipage}{.25\textwidth}
	\begin{tikzcd}[back line/.style={densely dotted}, row sep=0.6em, column sep=0.6em]
		& E \ar{dl}[swap]{} \ar{rr} \ar[back line]{dd}
		& & D_1 \ar{dd}{} \ar{dl}[swap,sloped,near start]{} \\
		D_2 \ar[crossing over]{rr}[near start]{} \ar{dd}[swap]{} 
		& & V_3 \\
		& D_3 \ar[back line]{rr} \ar[back line]{dl} 
		& & V_2. \ar{dl} \\
		V_1 \ar{rr} & & M \ar[crossing over, leftarrow]{uu}
	\end{tikzcd}
    \end{minipage}
\end{eqnarray}
\end{definicao}

\vspace{1mm}

We will denote $C_1$, $C_2$ and $C_3$ the cores of the double vector bundles $(E, D_2, D_3, V_1)$, $(E, D_1, D_3, V_2)$ and $(E, D_1, D_2, V_3)$, respectively, and by $K_1$, $K_2$ and $K_3$ the cores of the double vector bundles $(D_1, V_3, V_2, M)$, $(D_2, V_3, V_1, M)$ and $(D_3, V_2, V_1, M)$, respectively. Then we have three core double vector bundles, given by $(C_1, K_1, V_1, M)$, $(C_2, K_2, V_2, M)$ and $(C_3, V_3, K_3, M)$. These last three double vector bundle have the same core, denoted by $C$ and called the \textbf{ultracore}. 

\begin{exemplo}
	Given a double vector bundle $(D, B, A, M)$, the tangent bundle $TD$ is a triple vector bundle:
    $$
    \begin{minipage}{.25\textwidth}
	\begin{tikzcd}[back line/.style={densely dotted}, row sep=0.6em, column sep=0.6em]
		& TD \ar{dl}[swap]{} \ar{rr} \ar[back line]{dd}
		& & TB \ar{dd}{} \ar{dl}[swap,sloped,near start]{} \\
		D \ar[crossing over]{rr}[near start]{} \ar{dd}[swap]{} 
		& & B \\
		& TA \ar[back line]{rr} \ar[back line]{dl} 
		& & TM. \ar{dl} \\
		A \ar{rr} & & M \ar[crossing over, leftarrow]{uu}
	\end{tikzcd}
    \end{minipage}
    $$
The ultracore of $TD$ is the core of $D$.
\end{exemplo}

Since a triple vector bundle $E$ as in \eqref{triplevb} carries three vector bundle structures, we can dualize $E$ in three different directions. Each of these duals is a triple vector bundle, which we denote by $E^{*_{D_1}}$, $E^{*_{D_2}}$, and $E^{*_{D_3}}$, indicating the vector bundle on which we are considering the dual. The dual of a general vector bundle is given by
$$
\begin{minipage}{.25\textwidth}
	\begin{tikzcd}[back line/.style={densely dotted}, row sep=0.6em, column sep=0.6em]
		& E^{*_{D_1}} \ar{dl}[swap]{} \ar{rr} \ar[back line]{dd}
		& & D_1 \ar{dd}{} \ar{dl}[swap,sloped,near start]{} \\
		C_3^{*_{V_3}} \ar[crossing over]{rr}[near start]{} \ar{dd}[swap]{} 
		& & V_3 \\
		& C_2^{*_{V_2}} \ar[back line]{rr} \ar[back line]{dl} 
		& & V_2, \ar{dl} \\
		C^* \ar{rr} & & M \ar[crossing over, leftarrow]{uu}
	\end{tikzcd}
\end{minipage}
$$
where the ultracore is $V_1^*$, and the following side double vector bundles have the indicated cores:
\begin{eqnarray}
	\begin{tikzcd}[
		column sep={2em,between origins},
		row sep={1.3em,between origins},]
		E^{*_{D_1}} & \rightarrow & C_3^{*_{V_3}}  \\
		\downarrow & {\scriptstyle C_1^{*_{V_1}}} & \downarrow \\
		C_2^{*_{V_2}} & \rightarrow & C^*,
	\end{tikzcd}
   \qquad
    \begin{tikzcd}[
		column sep={2em,between origins},
		row sep={1.3em,between origins},]
		C_3^{*_{V_3}} & \rightarrow & V_3  \\
		\downarrow & {\scriptstyle K_3^*} & \downarrow \\
		C^* & \rightarrow & M,
	\end{tikzcd}
  \qquad
    \begin{tikzcd}[
		column sep={2em,between origins},
		row sep={1.3em,between origins},]
		C_2^{*_{V_2}} & \rightarrow & V_2  \\
		\downarrow & {\scriptstyle K_2^*} & \downarrow \\
		C^* & \rightarrow & M.
	\end{tikzcd} \nonumber
\end{eqnarray}

\begin{exemplo}
	For the tangent prolongation of a double vector bundle $(D, A, B, M)$ with core $C$, we can consider the following duals, given, respectively, by the cotangent of $D$ and the dual of $TD \rightarrow TA$:
$$
\begin{minipage}{.23\textwidth}
	\begin{tikzcd}[back line/.style={densely dotted}, row sep=0.6em, column sep=0.6em]
		& T^*D \ar{dl}[swap]{} \ar{rr} \ar[back line]{dd}
		& & D^{*_B} \ar{dd}{} \ar{dl}[swap,sloped,near start]{} \\
		D \ar[crossing over]{rr}[near start]{} \ar{dd}[swap]{} 
		& & B \\
		& D^{*_A} \ar[back line]{rr} \ar[back line]{dl} 
		& & C^*, \ar{dl} \\
		A \ar{rr} & & M \ar[crossing over, leftarrow]{uu}
	\end{tikzcd}
\end{minipage}  
\qquad \qquad \qquad
\begin{minipage}{.32\textwidth}
	\begin{tikzcd}[back line/.style={densely dotted}, row sep=0.6em, column sep=0.6em]
		& (TD)^{*_{TA}} \ar{dl}[swap]{} \ar{rr} \ar[back line]{dd}
		& & T^*C \ar{dd}{} \ar{dl}[swap,sloped,near start]{} \\
		D^{*_A} \ar[crossing over]{rr}[near start]{} \ar{dd}[swap]{} 
		& & C^* \\
		& TA \ar[back line]{rr} \ar[back line]{dl} 
		& & TM. \ar{dl} \\
		A \ar{rr} & & M \ar[crossing over, leftarrow]{uu}
	\end{tikzcd}
\end{minipage}
$$
The duality between $A$ and $A^*$ induces a duality between $TA \rightarrow TM$ and $T(A^*) \rightarrow TM$, and induces an isomorphism $T(A^*) = T(A^{*_M}) \rightarrow (TA)^{*_{TM}}$ (see \cite[\S 9.3]{mackenzie2005general}). These combine into an isomorphism between the triple vector bundle $(TD)^{*_{TA}}$ and the tangent prolongation of $(D^{*_A}, A, C^*, M)$. Note that the ultracore of $T^*D$ is $T^*M$, then dualizing $T^*D$ along the $x$-axis (along $D^{*_B}$), we get
$$
    \begin{minipage}{.25\textwidth}
	\begin{tikzcd}[back line/.style={densely dotted}, row sep=0.6em, column sep=0.6em]
		& (T^*D)^{*_x} \ar{dl}[swap]{} \ar{rr} \ar[back line]{dd}
		& & D^{*_B} \ar{dd}{} \ar{dl}[swap,sloped,near start]{} \\
		TB \ar[crossing over]{rr}[near start]{} \ar{dd}[swap]{} 
		& & B \\
		& TC^* \ar[back line]{rr} \ar[back line]{dl} 
		& & C^*. \ar{dl} \\
		TM \ar{rr} & & M \ar[crossing over, leftarrow]{uu}
	\end{tikzcd}
    \end{minipage}
$$
\end{exemplo}

\vspace{2mm}

In \cite[Theorem 6.1]{mackenzie2005duality},  Mackenzie proved the following: the map $R^{-1}_{D \to B} : T^*D \rightarrow T^*(D^{*_B})$, arising from the vector bundle $D \rightarrow B$, is an isomorphism of triple vector bundles over $Z_A : D^{*_A} \rightarrow D^{*_B*_{C^*}}$. The following proposition relates the different reversal isomorphisms on a double vector bundle, and its proof is straightforward.

\begin{proposicao}\label{propreversalcommute}
The following diagram commutes
\[
\begin{tikzcd}[column sep=large, row sep=large]
T^*\Omega^{*_A*_{C^*}} \arrow[r,"R_{\Omega^{*_A} \to C^*}"] \arrow[d,"(dZ_H)^{*_{\Omega^{*_B}}}"'] & T^*\Omega^{*_A} \arrow[d,"-_{\Omega^{*_A}} R_{\Omega \to A}"] \\
T^*\Omega^{*_B} \arrow[r,"R_{\Omega \to B}"'] & T^*\Omega,
\end{tikzcd}
\]
where the maps $R_{\Omega \to B}, R_{\Omega \to A}$ and $R_{\Omega^{*_A} \to C^*}$ are the reversal isomorphisms arising from the indicated vector bundles, $dZ_H$ is the tangent lift of the isomorphism $Z_H : \Omega^{*_B} \to \Omega^{*_A*_{C^*}}$ defined on \S\ref{sectiondvb}, and $-_{\Omega^{*_A}}$ is the scalar multiplication by $-1$ of the triple vector bundle $T^*\Omega$ with respect to $\Omega^{*_A}$.
\end{proposicao}

 \bibliographystyle{acm}
 \bibliography{biblio2}

\end{document}